\newtheorem{theorem}{Theorem}[section]
\newtheorem{lemma}[theorem]{Lemma}
\newtheorem{proposition}[theorem]{Proposition}
\theoremstyle{definition}
\newtheorem{definition}[theorem]{Definition}
\newtheorem{conjecture}{Conjecture}
\theoremstyle{remark}
\newtheorem{remark}[theorem]{Remark}
\numberwithin{equation}{section}
\begin{document}

\title[Minimal hypersurfaces with index one]{Compact minimal hypersurfaces of index one and the width of real projective spaces}

\author{Alejandra Ram\'irez-Luna}
\address{Departmento de Matem\'aticas, Universidad del Valle,‎ Calle 13 No. 100-00, Cali, Colombia.}
\email{maria.ramirez.luna@correounivalle.edu.co}
\thanks{The first author was supported by IMU and TWAS}

\date{July 29, 2019}

\dedicatory{}

\keywords{minimal hypersurface, projective space, Morse index}

\begin{abstract}
We characterize the first min-max width of real projective spaces of any dimension. The width is the minimum area over the
Clifford hypersurfaces. We also compute the Morse index of the Clifford hypersurfaces in the complex and quaternionic projective spaces. 

\end{abstract}

\maketitle

\section{Introduction}

When studying an $n + 1$ dimensional Riemannian manifold $N$, minimal hypsersurfaces arise naturally. The existence of minimal hypersurfaces was treated by the works of Almgren \cite{almgren1965theory}, Pitts \cite{pitts2014existence}, Schoen and Simon \cite{schoen1981regularity}. They proved the existence of a closed minimal hypersurface $\Sigma$ whose singular set is of Hausdorff dimension no larger than $n-7$. This minimal hypersurface is called a min-max hypersurface and has some specific properties. \\

Let $M$ be a minimal hypersurface of $N$. The Morse index of $M$ is the number of negative eigenvalues of the Jacobi operator $\displaystyle J_{M}$ counting multiplicity. We say $M$ is stable if its Morse index is zero. This notion is important since it  gives us information about how unstable the hypersurface is. Marques and Neves \cite{marques2012rigidity} prove that any metric on a three-sphere which has scalar curvature greater than or equal to $6$ and is not round must have an embedded minimal sphere (the min-max hypersurface) of area strictly smaller than $4\pi$ and Morse index at most one. When the Ricci curvature is positive  Zhou \cite{zhou2017min} shows that the min-max hypersurface is either orientable and has Morse index one, or is a double cover of a non-orientable stable minimal hypersurface. The second option given by  Zhou was ruled out by Ketover, Marques and Neves \cite{ketover2016catenoid} when $4\leq (n+1)\leq 7$ and by the author \cite{maleja1} in general dimensions.\\

The study of minimal hypersurfaces according to their index in a given Riemannian manifold has been of great interest in geometry.  Since the main work of Simons \cite{simons1968minimal} several works have been developed around this topic. For example, Ohnita \cite{ohnita1986stable} studied these hypersurfaces when they are stable (Morse index equal to zero) in compact symmetric spaces of rank one, Torralbo and Urbano \cite{torralbo2014stable} studied these hypersurfaces when they are stable in the product of a sphere and any Riemannian Manifold and Perdomo \cite{perdomo2004average} \cite{perdomo2001low} has worked in low index minimal hypersurfaces of spheres.\\

This continues to be an active area of study. For example, Perdomo \cite{2019arXiv190210801P} has proved that if the index of a compact minimal hypersurface $M$ in $S^{n+1}$ is $n+3$ then the average $\int_{M}|\sigma^{2}|$ of the second fundamental form $\sigma$ cannot be very small. On the other hand, it has been shown that there exist relations between topological, geometrical and analytical properties. In particular it has been seen that in certain cases the Morse index of a minimal hypersurface controls its geometry and topology. For instance, Savo \cite{savo2010index} has shown that the index of a compact, orientable minimal hypersurface of the unit sphere is bounded below by a linear function of the first Betti number  (see \cite{ambrozio2018comparing} for a general Riemannian manifold) and Chodosh and Maximo \cite{chodosh2016topology} have proved that for an immersed two-sided minimal surface in $\mathbb{R}^{3}$, there is a lower bound of the index depending on the genus and number of ends.\\

The classification of minimal hypersurfaces in Riemannian manifolds according to their Morse index has played an important role in the development of geometry. In the sphere $S^{n+1}$, we define the Clifford hypersurfaces by the set $S^{n_{1}}(R_{1})\times S^{n_{2}}(R_{2})$ where $n_{1}$ and $n_{2}$ are positive integers such that $n_{1}+n_{2}=n$ and $R_{1}$ and $R_{2}$ are positive real numbers such that $R_{1}^{2}+R_{2}^{2}=1$. We also call Clifford hypersurface to the projection of the Clifford hypersurfaces in $S^{n+1}$ under the usual immersion into the real projective space. It has been conjectured that the Clifford hypersurfaces are the only minimal hypersurfaces  with index $n+3$ in $S^{n+1}$. This has been solved by Urbano \cite{urbano1990minimal} when $n=2$, in fact, he stated that the index of a compact orientable nontotally geodesic minimal surface $\displaystyle \Sigma$ in $\displaystyle S^{3}$ is greater than or equal to $5$, and the equality holds if and only if $\displaystyle \Sigma$ is the Clifford torus. This last theorem was fundamental in the celebrated proof of the  Willmore conjecture by Marques and Neves  \cite{marques2014min}.\\

In this paper we are going to use the classification of index one minimal hypersurfaces of real projective spaces by do Carmo, Ritor\'e and Ros in order to compute their first min-max widths. This holds in any dimension and uses work of the author \cite{maleja1}. Viana also computed the width of the real projective space \cite{celso} and Batista and Lima did it in dimensions less than or equal to $7$ \cite{batista, batista2}.

\begin{theorem} \cite{do2000compact}
The only compact two-sided minimal hypersurfaces with index one in the real projective space $\displaystyle \mathbb{R}P^{r-1}$ are the totally geodesic spheres and the minimal Clifford hypersurfaces. 
\label{maintheoremreal}
\end{theorem}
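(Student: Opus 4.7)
The plan is to reduce the problem to $S^{r-1}$ via the antipodal double cover $\pi\colon S^{r-1}\to \mathbb{R}P^{r-1}$. Let $\tilde M=\pi^{-1}(M)$, a minimal hypersurface of $S^{r-1}$ invariant under the antipodal map $A(x)=-x$. Since $M$ is two-sided, a choice of unit normal $\nu$ on $\tilde M$ satisfies $\nu(A(p))=-\nu(p)$ as vectors in $\mathbb{R}^r$. Smooth functions on $M$ correspond bijectively to antipodally invariant smooth functions on $\tilde M$, and the Morse index of $M$ equals the dimension of the largest subspace of such invariant functions on which the second variation form
\[
Q(f)=\int_{\tilde M}\bigl(|\nabla f|^2-(|\sigma|^2+(r-2))f^2\bigr)
\]
is negative definite, where $\sigma$ is the second fundamental form of $\tilde M\subset S^{r-1}$.

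The next step is to introduce the ambient test functions $f_v(x)=\langle v,x\rangle$ and $\phi_v(x)=\langle v,\nu(x)\rangle$ for $v\in\mathbb{R}^r$. On a minimal hypersurface of $S^{r-1}$ one has $\Delta f_v=-(r-2)f_v$, while each $\phi_v$ is a Jacobi field, being the normal projection of an ambient Killing field. Both families are antipodally anti-invariant, so neither descends to $M$ individually; however, every bilinear expression in them is invariant and thus a legitimate test function on $M$. The natural invariant families are $F_{v,w}=f_v\phi_w$, $G_{v,w}=\phi_v\phi_w$, and the antisymmetric combinations $\psi_{v,w}=f_v\phi_w-f_w\phi_v$; the last are normal parts of rotational Killing fields of $\mathbb{R}P^{r-1}$ and therefore lie in the kernel of $Q$. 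Using the Leibniz rule for the Jacobi operator and the identities above, I would compute the matrix of $Q$ restricted to this bilinear invariant space.

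The hypothesis that $M$ has index exactly one means that on the span of the constant $1$, the $F_{v,w}$ and the $G_{v,w}$, modulo the known null directions, the symmetric form $Q$ has precisely one negative eigenvalue. The obvious candidate for this direction is the constant $1$, for which $Q(1)=-\int_{\tilde M}(|\sigma|^2+(r-2))<0$. Combined with the stated dimensional constraint, this should force strong pointwise identities on $|\sigma|^2$ and on the shape operator, in particular that $|\sigma|^2$ is constant along $\tilde M$. The principal obstacle is the final rigidity step: extracting from the pointwise identities that $\tilde M$ is either a great sphere $S^{r-2}$ or a Clifford product $S^{n_1}(R_1)\times S^{n_2}(R_2)$ with $R_i^2=n_i/(r-2)$. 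This is the place where a Simons-type inequality together with the Chern--do Carmo--Kobayashi classification of closed minimal hypersurfaces of $S^{r-1}$ with constant $|\sigma|^2$ would come in to finish the argument.
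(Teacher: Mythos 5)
First, a point of reference: the paper does not prove Theorem \ref{maintheoremreal} at all --- it is imported from do Carmo, Ritor\'e and Ros \cite{do2000compact}, with only a remark about the extension to the singular case. So the only meaningful comparison is with their argument, and your outline does follow the same strategy: pass to the antipodal double cover $\tilde M\subset S^{r-1}$, observe that two-sidedness forces $\nu(-p)=-\nu(p)$ so that the products $\langle v,x\rangle\langle w,\nu\rangle$ are antipodally invariant test functions, and close with Simons' inequality and the Chern--do Carmo--Kobayashi/Lawson rigidity. Within that correct skeleton there is one false assertion: $\phi_v=\langle v,\nu\rangle$ is \emph{not} a Jacobi field of $\tilde M\subset S^{r-1}$. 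The constant fields $v$ are not Killing fields of the sphere; from $\Delta\nu+|\sigma|^2\nu=0$ one gets $J\phi_v=(r-2)\phi_v$, so the $\phi_v$ are negative directions for $Q$ (this is exactly why non-totally-geodesic minimal hypersurfaces of the sphere have large index, and why the quotient to $\mathbb{R}P^{r-1}$, which kills these anti-invariant functions, can have index one). Only the rotational combinations $\psi_{v,w}=f_v\phi_w-f_w\phi_v$ lie in $\ker J$, as you correctly note afterwards.

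The structural problem is that your proof stops exactly where the theorem lives. ``Compute the matrix of $Q$ on the bilinear invariant space\dots this should force strong pointwise identities on $|\sigma|^2$'' is the entire content of the result, and it does not follow from a dimension count. Index one only gives $Q(u,u)\ge 0$ for invariant $u$ orthogonal to the (invariant, positive) first eigenfunction $\varphi$, so you must (i) analyze the bilinear form $(v,w)\mapsto\int_{\tilde M}\varphi\,f_v\phi_w$ on $\mathbb{R}^r\times\mathbb{R}^r$ and select enough pairs with vanishing pairing, and (ii) show that summing $Q(f_v\phi_w,f_v\phi_w)\ge0$ over those pairs yields a signed integral inequality. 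The relevant computation is $J(f_v\phi_w)=2\langle\nabla f_v,\nabla\phi_w\rangle=-2\langle v^{\top},Aw^{\top}\rangle$, and the danger is real: summing over \emph{all} pairs from an orthonormal basis gives identically zero, since $\sum_i x_i e_i^{\top}=x^{\top}=0$ and $\sum_j \nu_j e_j^{\top}=\nu^{\top}=0$, so the choice of test directions is where the argument actually happens. Only after extracting an inequality of the form $\int_{\tilde M}|\sigma|^2\bigl(|\sigma|^2-(r-2)\bigr)\le0$ does Simons' inequality give equality, hence $\nabla A\equiv 0$, and then the classification of minimal hypersurfaces with parallel second fundamental form delivers the great spheres and the Clifford products. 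The products $\phi_v\phi_w$ are an unnecessary complication (their Jacobi operator brings in $A^2$), and $f_vf_w$ is also invariant but likewise not needed. As written, the proposal is a plausible plan with the decisive lemma missing.
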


\begin{remark}
In \cite{do2000compact}, do Carmo, Ritor\'e and Ros prove the smooth version
and point out that the extension to the singular case follows as in \cite{morgan2002isoperimetric}.
\end{remark}

Consequently, we used Theorem \ref{maintheoremreal} to classify minimal hypersurfaces obtained by the min-max technique applied to one-parameter sweepouts (see section 4 for more details).

\begin{theorem} Let $\displaystyle \Sigma_{i}$ be the min-max hypersurface in $\displaystyle \mathbb{R}P^{i}$; $i\in \mathbb{N}$. Then $\displaystyle \Sigma_{i}$ is the minimal Clifford hypersurface of least area. In particular,

\begin{center}
$\displaystyle W(\mathbb{R}P^{i})=|\Sigma_{i}|=\left\{ \begin{array}{lcc}
              |\Pi_{\mathbb{R}}(S^{1}_{\sqrt{\frac{1}{2}}}\times S^{1}_{\sqrt{\frac{1}{2}}})|=\pi^{2} &  {\rm if}  & i=3 \\
              |\Pi_{\mathbb{R}}(S^{1}_{\sqrt{\frac{1}{3}}}\times S^{2}_{\sqrt{\frac{2}{3}}})|=\frac{8\pi^{2}}{3\sqrt{3}} &  {\rm if}  & i=4\\
              |\Pi_{\mathbb{R}}(S^{2}_{\sqrt{\frac{2}{4}}}\times S^{2}_{\sqrt{\frac{2}{4}}})|=2\pi^{2} &  {\rm if} & i=5\\
               |\Pi_{\mathbb{R}}(S^{2}_{\sqrt{\frac{2}{5}}}\times S^{3}_{\sqrt{\frac{3}{5}}})|= \frac{24}{25}\sqrt{\frac{3}{5}}\pi^{3} &  {\rm if} & i=6\\
              |\Pi_{\mathbb{R}}(S^{3}_{\sqrt{\frac{3}{6}}}\times S^{3}_{\sqrt{\frac{3}{6}}})|=\frac{\pi^{4}}{4} &  {\rm if} & i=7
             \end{array}
   \right.$
\end{center}
where $W$ is the width.
\label{cori1}
\end{theorem}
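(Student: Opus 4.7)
The plan is to sandwich $W(\mathbb{R}P^i)$ between an explicit sweepout-based upper bound and the lower bound coming from the classification in Theorem \ref{maintheoremreal}, and then show both equal the area of the least-area minimal Clifford hypersurface.

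For the upper bound I would build a one-parameter sweepout from a Clifford-type pencil. Fix a splitting $n = n_1 + n_2$ with $n = i-1$ and $n_1, n_2 \geq 1$, and inside $S^i \subset \mathbb{R}^{n_1+1} \times \mathbb{R}^{n_2+1}$ consider
\[
\widetilde{\Sigma}_t \;=\; S^{n_1}(t) \times S^{n_2}\!\bigl(\sqrt{1-t^2}\bigr), \qquad t \in [0,1].
\]
Since each $\widetilde{\Sigma}_t$ is antipodally invariant, $\Sigma_t := \Pi_{\mathbb{R}}(\widetilde{\Sigma}_t)$ is a one-parameter family in $\mathbb{R}P^i$ whose endpoints degenerate to submanifolds of dimension less than $n$, giving $|\Sigma_0|=|\Sigma_1|=0$. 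The associated enclosed regions $U_t = \Pi_{\mathbb{R}}(\{|x|^2 < t^2\})$ pass from $\emptyset$ to all of $\mathbb{R}P^i$, identifying $\{\Sigma_t\}$ with the generator of $\pi_1(\mathcal{Z}_n(\mathbb{R}P^i;\mathbb{Z}_2)) \cong H_i(\mathbb{R}P^i;\mathbb{Z}_2) \cong \mathbb{Z}_2$, so it is an admissible one-parameter sweepout. A direct computation shows $t \mapsto |\Sigma_t|$ is maximized at $t = \sqrt{n_1/n}$, precisely where $\Sigma_t$ becomes the minimal Clifford of type $(n_1,n_2)$. Taking $(n_1,n_2)$ that minimizes this area yields $W(\mathbb{R}P^i) \leq$ (area of the least-area minimal Clifford).

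For the matching lower bound I would apply the min-max theorem together with Theorem \ref{maintheoremreal}. Almgren--Pitts min-max for one-parameter sweepouts produces a closed minimal hypersurface $\Sigma_i$ realizing $W(\mathbb{R}P^i)$, of Morse index at most one. Because $\mathbb{R}P^i$ has positive Ricci curvature, Zhou's dichotomy \cite{zhou2017min} says $\Sigma_i$ is either two-sided with index exactly one, or a double cover of a one-sided stable minimal hypersurface; the second alternative is excluded in all dimensions by \cite{maleja1} (and by \cite{ketover2016catenoid} when $4 \leq i \leq 7$). Hence $\Sigma_i$ is two-sided with index one, and Theorem \ref{maintheoremreal} then forces $\Sigma_i$ to be either the orientable double cover of the totally geodesic $\mathbb{R}P^{i-1}$ (of area $|S^{i-1}|$) or a minimal Clifford hypersurface.

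To finish, I would eliminate the totally geodesic sphere by a direct area check: for each $i$ in the statement, $|S^{i-1}|$ strictly exceeds the area of the least-area Clifford---for example $|S^2|=4\pi > \pi^2$ when $i=3$, and $|S^6| = 16\pi^3/15 > \pi^4/4$ when $i=7$. Combined with the sweepout upper bound, this forces $\Sigma_i$ to be a minimal Clifford, and its area being at most that of the least-area Clifford pins it down as the least-area Clifford itself; substituting $R_1 = \sqrt{n_1/n}$, $R_2 = \sqrt{n_2/n}$ at the minimizing splitting then reproduces the tabulated values. The main obstacle is the sweepout step: one must verify that the two degenerate endpoints are handled correctly in the Almgren--Pitts framework and that the family represents the nontrivial class in $\pi_1(\mathcal{Z}_n(\mathbb{R}P^i;\mathbb{Z}_2))$; a secondary point, needed beyond the tabulated cases, is confirming that among all admissible splittings $(n_1,n_2)$ the balanced one always minimizes the Clifford area.
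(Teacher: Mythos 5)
Your proposal is correct, and its second half (the lower bound) is exactly the paper's argument: min-max gives a hypersurface realizing the width with index at most one, Zhou's dichotomy for positive Ricci curvature plus \cite{maleja1} (and \cite{ketover2016catenoid} in low dimensions) upgrades this to a two-sided, multiplicity-one hypersurface of index exactly one, and Theorem \ref{maintheoremreal} then restricts the candidates. Where you diverge is the upper bound: the paper does not build a sweepout at all, but instead invokes the second bullet of Theorem \ref{xin}, namely $W(M)=\min_\sigma\{|\sigma| \text{ or } 2A(\sigma)\}$ over all minimal hypersurfaces, which immediately gives $W(\mathbb{R}P^i)\leq$ (least Clifford area) and pins $\Sigma_i$ down as the least-area Clifford. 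Your explicit Clifford pencil $S^{n_1}(t)\times S^{n_2}(\sqrt{1-t^2})$, with the maximum of $t\mapsto c\,t^{n_1}(1-t^2)^{n_2/2}$ occurring at $t^2=n_1/n$, re-derives that special case of the cited theorem by hand; this is more self-contained but transfers the burden to verifying admissibility of the family (continuity, no concentration of mass, detection of the generator of $\pi_1(\mathcal{Z}_n(\mathbb{R}P^i;\mathbb{Z}_2))$ via the boundaries of the nested regions $U_t$), which is standard but not free. Your explicit elimination of the totally geodesic sphere by the comparison $|S^{i-1}|>$ (least Clifford area) is a point the paper passes over in silence; it is in fact handled automatically there, since the totally geodesic sphere is the double cover of the one-sided stable $\mathbb{R}P^{i-1}$, which is precisely the alternative already excluded by \cite{maleja1}, but your area argument is an equally valid and arguably more transparent way to dispose of it. Two minor remarks: for even $i$ you should say explicitly that you are using the $\mathbb{Z}_2$-coefficient width and replacing orientability by two-sidedness (the paper's Remark \ref{twoside}); and your ``secondary point'' about the balanced splitting minimizing the area is not actually needed for the statement as written, since the general claim is only ``the Clifford of least area'' and the identification of which splitting wins is a finite check in the tabulated cases $3\leq i\leq 7$.
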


We also find an upper bound for the area of the min-max hypersurface in $\mathbb{C}P^{i}$, $i\in \{2,3\}$

\begin{theorem} Let $\displaystyle \Sigma_{i}$ be  the min-max hypersurface in $\displaystyle \mathbb{C}P^{i}$, $\displaystyle i\in\{2,3\}$. Then $\displaystyle \Sigma_{i}$ is a minimal Clifford hypersurface and  
\begin{center}
$\displaystyle W(\mathbb{C}P^{i})=|\Sigma_{i}|\leq \left\{ \begin{array}{lcc}
             |\Pi_{\mathbb{C}}(S^{1}_{\sqrt{\frac{1}{4}}}\times S^{3}_{\sqrt{\frac{3}{4}}})|=\frac{3\sqrt{3}\pi^{2}}{8} &  {\rm if}  & i=2 \\
              |\Pi_{\mathbb{C}}(S^{3}_{\sqrt{\frac{3}{6}}}\times S^{3}_{\sqrt{\frac{3}{6}}})|=\frac{\pi^{3}}{4} &  {\rm if} & i=3
             \end{array}
   \right.$
\end{center}
where $W$ is the width.
\label{cori2}
\end{theorem}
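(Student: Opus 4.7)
The proof plan closely parallels that of Theorem \ref{cori1}, with one essential modification: the absence of a complex analogue of the do Carmo–Ritoré–Ros classification forces us to settle for an upper bound on the width rather than an equality. The scheme has three steps, executed in the following order.

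First, I would construct a one-parameter sweepout $\{\Phi(t)\}_{t\in[0,1]}$ of $\mathbb{C}P^i$ whose max-area slice is precisely the Clifford hypersurface appearing on the right-hand side. The natural candidate is the family of geodesic tubes around a totally geodesic submanifold $\mathbb{C}P^{p}\subset\mathbb{C}P^i$, collapsed to a totally geodesic $\mathbb{C}P^{q}$ at the two endpoints (with $p+q=i-1$). Because each tube is Hopf-invariant and isometric to a quotient of $S^{2p+1}_{R_1(t)}\times S^{2q+1}_{R_2(t)}\subset S^{2i+1}$, a direct computation of the area function $t\mapsto|\Phi(t)|$ shows that the unique critical value is attained when the slice is minimal, i.e.\ when $R_j^2 = (2p_j+1)/(2i)$. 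For $i=2$ we take $p=0$, $q=1$ (tubes around a point), and for $i=3$ we take $p=q=1$ (tubes around a $\mathbb{C}P^1$); in both cases the critical slice is exactly the Clifford hypersurface listed in the statement. This yields $W(\mathbb{C}P^i)\leq\max_t|\Phi(t)|=|\Pi_{\mathbb{C}}(\cdot)|$.

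Second, I would run the Almgren–Pitts min-max procedure on the homotopy class of this sweepout to produce a closed embedded min-max hypersurface $\Sigma_i$ of area $W(\mathbb{C}P^i)$ and Morse index at most one. Since $\mathbb{C}P^i$ has positive Ricci curvature, no stable minimal hypersurface exists, so the index is at least one; and Zhou's dichotomy \cite{zhou2017min} together with the author's ruling out of the non-orientable case in \cite{maleja1} guarantees that $\Sigma_i$ is two-sided, orientable, and of index exactly one.

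Third — and this is where the main obstacle lies — I must identify $\Sigma_i$ as a Clifford hypersurface. The index-one classification in $\mathbb{C}P^i$ is not available in the generality supplied by Theorem \ref{maintheoremreal} for real projective space, which is precisely why the conclusion is stated only as an inequality. The plan is to combine the explicit Morse index computation for the Clifford hypersurfaces in $\mathbb{C}P^i$ carried out elsewhere in this paper (cf.\ the abstract) with a dimension-specific rigidity argument available only for $i\in\{2,3\}$: one exploits the restricted possible topologies of a two-sided index-one minimal hypersurface of dimension $3$ or $5$ in a positively curved Kähler manifold, together with the Hopf-invariance imposed by the symmetry of the ambient space, to conclude that $\Sigma_i$ must in fact be Hopf-invariant and hence descend from a Clifford hypersurface in the round sphere. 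This last identification step is the delicate part and is the reason the theorem is limited to $i\in\{2,3\}$; extending it to larger $i$ would require a full complex analogue of do Carmo–Ritoré–Ros, which is not currently known.
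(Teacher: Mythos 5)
Your first two steps are sound but take a different route from the paper for the upper bound: the paper simply invokes the second bullet of Theorem \ref{xin} (the width equals the minimum of $|\sigma|$, resp.\ $2A(\sigma)$, over all minimal hypersurfaces $\sigma$), so the area of any single minimal Clifford hypersurface, computed from equation (\ref{areasclifford}), is automatically an upper bound for $W(\mathbb{C}P^{i})$; no explicit sweepout by geodesic tubes is required. Your tube construction would also yield the bound (and is the standard way to see it without the min formula), provided you verify that the family is admissible in the Almgren--Pitts class and that the area function is maximized at the minimal slice, but it is more work than the paper's one-line deduction.

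The genuine gap is in your third step. The identification of $\Sigma_{i}$ as a Clifford hypersurface cannot be carried out by the argument you sketch: there is no classification of two-sided index-one minimal hypersurfaces in $\mathbb{C}P^{i}$ analogous to Theorem \ref{maintheoremreal}, and nothing forces the min-max hypersurface to be Hopf-invariant ($S^{1}$-equivariant). The index computation of Theorem \ref{modestia} shows only that the Clifford hypersurfaces \emph{have} index one, not that they are the \emph{only} hypersurfaces with index one; the latter is precisely Conjecture \ref{maintheoremcomplex}, which the paper leaves open and whose truth is explicitly stated to be what would upgrade the inequality in Theorem \ref{cori2} to an equality. Your appeal to ``restricted possible topologies'' of index-one hypersurfaces in dimensions $3$ and $5$ and to ``Hopf-invariance imposed by the symmetry of the ambient space'' does not correspond to any available theorem --- symmetry of the ambient manifold does not imply symmetry of an individual minimal hypersurface. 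Consequently the claim that $\Sigma_{i}$ is a minimal Clifford hypersurface is not established by your proposal; only the inequality $W(\mathbb{C}P^{i})=|\Sigma_{i}|\leq |\Pi_{\mathbb{C}}(\cdot)|$ survives, which is in fact all that the paper's own Section~4 proof establishes.
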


Additionally, the theory developed in the proof of the Theorem \ref{maintheoremreal} inspired us to find the index of minimal Clifford Hypersurfaces in the complex and quaternionic space 
\begin{theorem}
The index of the Clifford hypersurfaces in $\displaystyle \mathbb{K}P^{r-1}$ is one, $\mathbb{K}\in \{\mathbb{R}, \mathbb{C}\}$
\label{modestia}
\end{theorem}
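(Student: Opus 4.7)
The real case is Theorem~\ref{maintheoremreal} of do Carmo--Ritor\'e--Ros, so the task reduces to $\mathbb{K}=\mathbb{C}$. My plan is to lift to the sphere via the Hopf fibration $\Pi_{\mathbb{C}}:S^{2r-1}\to\mathbb{C}P^{r-1}$, which is a Riemannian submersion with totally geodesic $S^{1}$-fibers of length $2\pi$. The minimal Clifford hypersurface $M\subset\mathbb{C}P^{r-1}$ lifts to $\widetilde M = S^{2n_1-1}(\sqrt{n_1/r})\times S^{2n_2-1}(\sqrt{n_2/r})\subset S^{2r-1}$, and the Hopf action restricts freely to $\widetilde M$ with orbits that are great circles, hence totally geodesic in $\widetilde M$ as well.

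The first step is to show that pullback intertwines the Jacobi operators. For $v\in C^{\infty}(M)$ set $\widetilde v = v\circ\Pi_{\mathbb{C}}$. Since $\widetilde M\to M$ is a Riemannian submersion with totally geodesic fibers and $\widetilde v$ is basic, $\Delta_{\widetilde M}\widetilde v = (\Delta_M v)\circ\Pi_{\mathbb{C}}$; using O'Neill's formulas (together with the fact that the unit normal to $\widetilde M$ is horizontal for $\Pi_{\mathbb{C}}$) one verifies the potential identity
\begin{equation*}
|A_{\widetilde M}|^2 + \mathrm{Ric}_{S^{2r-1}}(\widetilde\nu,\widetilde\nu) \;=\; \bigl(|A_M|^2 + \mathrm{Ric}_{\mathbb{C}P^{r-1}}(\nu_M,\nu_M)\bigr)\circ\Pi_{\mathbb{C}}.
\end{equation*}
Combining these gives $J_{\widetilde M}\widetilde v = (J_M v)\circ\Pi_{\mathbb{C}}$, so the index of $M$ equals the dimension of the negative eigenspace of $J_{\widetilde M}$ intersected with the $S^{1}$-invariant functions on $\widetilde M$.

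Next I would compute the negative spectrum of $J_{\widetilde M}$ directly. Writing $d_i = 2n_i - 1$ and $n = d_1 + d_2 = 2r - 2$, minimality gives $|A_{\widetilde M}|^2 = n$ and $\mathrm{Ric}_{S^{2r-1}}(\widetilde\nu,\widetilde\nu) = n$, so $J_{\widetilde M} = -\Delta_{\widetilde M} - 2n$. The Laplace eigenvalues on the product are
\begin{equation*}
\lambda_{k_1,k_2}=\frac{k_1(k_1+d_1-1)\,n}{d_1}+\frac{k_2(k_2+d_2-1)\,n}{d_2},
\end{equation*}
and a direct check shows $\lambda_{k_1,k_2}<2n$ only when $(k_1,k_2)\in\{(0,0),(1,0),(0,1)\}$, corresponding to constants and first spherical harmonics on $S^{2n_i-1}\subset\mathbb{C}^{n_i}$.

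Finally, I identify the $S^{1}$-invariant parts of these eigenspaces. Constants trivially contribute $1$. A first harmonic on $S^{2n_i-1}$ is the restriction of a real-linear functional $z\mapsto \mathrm{Re}\langle a,z\rangle$ on $\mathbb{C}^{n_i}$, which under $z\mapsto e^{i\theta}z$ becomes $\mathrm{Re}(e^{i\theta}\langle a,z\rangle)$; this is $\theta$-independent only when $a=0$, so no nontrivial first harmonic descends. Since the constant function on $M$ already forces the index to be at least one, I conclude that the index equals one. The main obstacle is the intertwining step: the Laplacian identity is routine, but verifying the potential identity demands a careful application of O'Neill's curvature formulas, exploiting both the totally geodesic Hopf fibers and the horizontality of the unit normal to $\widetilde M$. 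Once that correspondence is secured, the remaining eigenvalue count is immediate.
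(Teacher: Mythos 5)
Your proposal is correct and follows essentially the same route as the paper: lift to $S^{dr-1}$, use the intertwining $J_{\widetilde M}\widetilde v=(J_{M}v)\circ\Pi_{\mathbb{K}}$ on basic functions (the paper cites the corresponding remark in Barbosa--do Carmo rather than rederiving it from O'Neill), compute the product-sphere Laplace spectrum, observe that $\lambda_{1,1}=2n$ while $\lambda_{2,0},\lambda_{0,2}\geq\lambda_{1,1}$, and discard the first harmonics because they are not invariant under the fiber action, leaving only the constants. Two minor remarks: the radii of the lifted Clifford hypersurface should be $\sqrt{(2n_i-1)/(2r-2)}$ rather than $\sqrt{n_i/r}$ (your eigenvalue formula already uses the correct ones), and the paper does not outsource the real case to Theorem~\ref{maintheoremreal} --- whose statement, as quoted, asserts only that index-one hypersurfaces are Clifford, not the converse --- but instead runs the identical spectral argument with $S^{0}=\{\pm 1\}$, using $-\mathrm{id}\in S^{d-1}$ to rule out the linear eigenfunctions uniformly in $d$.
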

and to conjecture a generalization of Theorem \ref{maintheoremreal}

\begin{conjecture}
The only compact minimal hypersurfaces with index one in the projective space $\displaystyle \mathbb{K}P^{r-1}$ are the minimal Clifford hypersurfaces. In particular they can be minimal geodesic spheres. 
\label{maintheoremcomplex}
\end{conjecture}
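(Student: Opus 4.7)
The plan is to adapt the strategy employed by do Carmo, Ritor\'e and Ros in Theorem~\ref{maintheoremreal} to the projective spaces $\mathbb{K}P^{r-1}$ with $\mathbb{K}\in\{\mathbb{C},\mathbb{H}\}$. Given a compact minimal hypersurface $M\subset\mathbb{K}P^{r-1}$ of index one, I would first lift $M$ to a minimal hypersurface $\tilde{M}=\Pi_{\mathbb{K}}^{-1}(M)$ in the round sphere $S^{dr-1}$ through the Hopf fibration $\Pi_{\mathbb{K}}\colon S^{dr-1}\to\mathbb{K}P^{r-1}$, where $d=\dim_{\mathbb{R}}\mathbb{K}\in\{2,4\}$. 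The hypersurface $\tilde{M}$ is invariant under the free isometric action of the fiber group $\Gamma$ ($\Gamma=S^{1}$ when $\mathbb{K}=\mathbb{C}$, $\Gamma=S^{3}$ when $\mathbb{K}=\mathbb{H}$), so the Jacobi operator $J_{\tilde{M}}$ commutes with $\Gamma$ and its spectrum decomposes into isotypic components. The index of $M$ on $\mathbb{K}P^{r-1}$ then coincides with the index of $J_{\tilde{M}}$ restricted to the $\Gamma$-invariant subspace, and the same identification was used implicitly to prove Theorem~\ref{modestia}; the first step is to run that correspondence in the reverse direction, so that the hypothesis ``index of $M$ equals one'' becomes a statement about a single negative direction of $J_{\tilde{M}}$ among $\Gamma$-invariant test functions.

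Next, following the test-function calculus of \cite{do2000compact}, I would exploit the natural family of candidates on $\tilde{M}$ obtained from restrictions of linear functions on $\mathbb{R}^{dr}\supset S^{dr-1}$ and from the ambient Killing fields generated by the unitary, respectively symplectic, isometry group of $\mathbb{K}P^{r-1}$. These test functions pair against $J_{\tilde{M}}$ through explicit elliptic identities whose associated quadratic forms can be written in terms of $|A|^{2}$ and inner products with the position vector. Imposing that the $\Gamma$-invariant subspace of such test functions producing negative Rayleigh quotient is at most one-dimensional yields a large overdetermined system of integral identities on $M$.

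I would then argue that these identities, combined with the $\mathbb{K}$-linear symmetry of the ambient metric, force $M$ to be preserved by a subgroup of the isometry group of $\mathbb{K}P^{r-1}$ of sufficiently large cohomogeneity. At that point the classification of cohomogeneity-one minimal hypersurfaces in the rank-one compact symmetric spaces (or equivalently the classification of $\Gamma$-invariant product-minimal hypersurfaces of $S^{dr-1}$) finishes the argument, identifying $M$ as a minimal Clifford hypersurface, which degenerates to a minimal geodesic sphere precisely when one of the radial factors vanishes, thereby confirming the second sentence of the conjecture.

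The main obstacle I expect is the rigidity step in the complex and quaternionic cases. In the real setting one can employ the full coordinate splitting $\mathbb{R}^{r}=\mathbb{R}^{n_{1}+1}\oplus\mathbb{R}^{n_{2}+1}$ to manufacture enough test functions to pin down the geometry, whereas under the $\Gamma$-action only $\mathbb{K}$-linear combinations descend, so the integral identities produced by the index-one hypothesis are strictly weaker. Supplementing them with test sections built from the parallel $\mathbb{K}$-structure of $\mathbb{K}P^{r-1}$, and verifying that the resulting quadratic form still detects enough directions to recover the product decomposition, is where the argument will be most delicate. A secondary difficulty is handling possibly singular $M$ in dimensions $\geq 7$, which as in the remark following Theorem~\ref{maintheoremreal} should be treatable by the methods of \cite{morgan2002isoperimetric}.
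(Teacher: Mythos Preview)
The statement you are attempting to prove is labeled in the paper as a \emph{Conjecture}, not a theorem: the paper gives no proof of it whatsoever. The author proves only the converse direction (Theorem~\ref{modestia}, that the Clifford hypersurfaces have index one), remarks that the real case is due to do Carmo--Ritor\'e--Ros (Theorem~\ref{maintheoremreal}), and then explicitly records the complex and quaternionic classification as open. So there is no ``paper's own proof'' against which to compare your proposal.

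As for the proposal itself, it is not a proof but an outline of a strategy, and you say so yourself. The first paragraph (lifting to $S^{dr-1}$ and identifying the index of $M$ with the $\Gamma$-invariant index of $J_{\tilde M}$) is exactly the framework the paper sets up in Remark~\ref{remarkfeatures} and Lemma~\ref{features}, so that part is on solid ground. The genuine gap is precisely the one you flag: in the real case the crucial input from \cite{do2000compact} is that the linear coordinate functions on $\mathbb{R}^r$ restricted to $\tilde M$ give enough test functions orthogonal to the first eigenfunction to force the product splitting, but when $\Gamma=S^1$ or $S^3$ these linear functions are never $\Gamma$-invariant (the paper uses exactly this observation in the proof of Theorem~\ref{modestia} to discard $\beta_{10}$ and $\beta_{01}$), so they do not descend to $\mathbb{K}P^{r-1}$ and cannot be fed into the index-one hypothesis. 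Your suggestion to replace them by Killing fields or by sections built from the parallel $\mathbb{K}$-structure is plausible but entirely unexecuted; until one shows that such replacements actually generate enough negative directions to run a rigidity argument, this remains the heart of the open problem, not a gap in an otherwise complete proof.
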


We point out that if the Conjecture \ref{maintheoremcomplex} is true, then we have equality in Theorem \ref{cori2}.

We have organized this paper in three sections. The first section gives some basic definitions, notations and facts needed along the paper. In the second section we related a compact minimal hypersurface $\bar{M}$ in $\displaystyle \mathbb{K}P^{r-1}$ with its inverse image $\displaystyle M:=\Pi_{\mathbb{K}}^{-1}(\bar{M})$ in $S^{dr-1}$,
where $\mathbb{K}\in \{\mathbb{C}, \mathbb{H} \}$, $d=\dim_{\mathbb{R}}\mathbb{K}$ and $\displaystyle \Pi_{\mathbb{K}}$ is the usual submersion (see preliminary concepts section). We also stablished some features of $M$ which are inherited from $\displaystyle \bar{M}$ such as minimality, orientability and characteristics of the Jacobi operator (see Remark \ref{remarkfeatures} and Lemma \ref{features}). The third section is an application to the min-max hypersurface in $\displaystyle \mathbb{C}P^{r}$, $\displaystyle \mathbb{R}P^{3}$,$\displaystyle \mathbb{R}P^{4}$, $\displaystyle \mathbb{R}P^{5}$, $\displaystyle \mathbb{R}P^{6}$ and  $\displaystyle \mathbb{R}P^{7}$; $r\in \mathbb{N}$.  \\

\textbf{Acknowledgments}
This study was made possible thanks to a PhD scholarship (IMU Breakout Graduate Fellowship) from IMU and TWAS to the author. I  am very grateful for the excellent direction of Professor Fernando Cod\'a Marques. I deeply value the meaningful conversations with Professor Gonzalo Garc\'ia.  Part of this work was done while the author was visiting Princeton University as a VSRC. I am grateful to Princeton University for the hospitality. I am also thankful to the Deparment of Mathematics at Universidad del Valle for partial support my visit to Princeton.

\section{Preliminary concepts}
In this first section we will give some important definitions and basic properties given in \cite{barbosa2012stability} and
\cite{do2000compact}.  Let $\displaystyle S^{N}_{R}$ be the Euclidean $N$- sphere of radius $R$, $d=\dim_{\mathbb{R}}(\mathbb{K})$ where $\mathbb{K}\in \{\mathbb{C}, \mathbb{H}\}$. The sphere $\displaystyle S^{d-1}$  acts freely on $\displaystyle S^{dr-1}_{1}\subset \mathbb{R}^{dr}=\mathbb{K}^{r}$ by left scalar multiplication. Then we define $\displaystyle \mathbb{K}P^{r-1}:= S^{dr-1}_{1}/ S^{d-1}$, i.e. the orbit space. The manifold $\displaystyle \mathbb{K}P^{r-1}$ is called  \textbf{the complex projective space} (resp. \textbf{the quaternionic projective space}) of real dimension $\displaystyle 2r-2$ (resp. $4r-4$) when $\mathbb{K}=\mathbb{C}$ (resp. $\mathbb{K}=\mathbb{H}$). Action of $S^{d-1}$ induces the usual Riemannian submersion
\begin{center}
$\displaystyle \Pi_{\mathbb{K}}:S_{1}^{dr-1}\rightarrow \mathbb{K}P^{r-1} $,
\end{center}
where $\displaystyle \Pi_{\mathbb{K}}(p)$ is the orbit of $p$.

\begin{definition}
A \textbf{Clifford hypersurface} is the set of points $\displaystyle S^{n_{1}}_{R_{1}} \times S^{n_{2}}_{R_{2}}$, such that $\displaystyle n_{i}\in \mathbb{Z}^{+}$ and $\displaystyle R_{i} \in \mathbb{R}^{+}$, where $\displaystyle n_{1}+n_{2}=dr-2$  and  $\displaystyle R_{1}^{2}+R_{2}^{2}=1$. When 
\begin{center}
$n_{1}\equiv -1 \mod d$ 
\end{center}
 we will call also as Clifford hypersurface the set $\displaystyle \Pi_{\mathbb{K}}(S^{n_{1}}_{R_{1}} \times S^{n_{2}}_{R_{2}})$ in $\mathbb{K}P^{r-1}$. 
\end{definition}

\begin{remark}
The manifold $\displaystyle S^{n_{1}}_{R_{1}} \times S^{n_{2}}_{R_{2}}$ is minimal if and only if $\displaystyle n_{1}R_{2}^{2}=n_{2}R_{1}^{2}$.
\end{remark}

\begin{remark} \cite{barbosa2012stability}
Let $\displaystyle U_{\rho}(.)$ be the tubular neighborhood of radius $\displaystyle \rho$, $\displaystyle c=\cos(\rho)$ and $\displaystyle s=\sin(\rho)$. 
Since
\begin{center}
$\displaystyle S_{c}^{d-1}\times S_{s}^{d(r-1)-1}=\partial U_{\rho}(S^{d-1})=\partial U_{\rho}(\Pi_{\mathbb{K}}^{-1}(\mathbb{K}P^{0}))=
\Pi_{\mathbb{K}}^{-1}(\partial U_{\rho}(\mathbb{K}P^{0}))$,\\
\end{center}
then the hypersurface $\Pi_{\mathbb{K}}(\displaystyle S_{c}^{d-1}\times S_{s}^{d(r-1)-1}) $ corresponds to a geodesic sphere in $\displaystyle \mathbb{K}P^{r-1}$. 
\end{remark}
 
Let $\displaystyle f:M\rightarrow S^{dr-1}$ be a minimal compact orientable hypersurface, $\displaystyle N$ its normal vector field and $\displaystyle |\sigma|$ the norm of the second fundamental form of $f$. Then we have the equations
\begin{equation}
\bigtriangleup_{M} f + (dr-2)f=0 \,\,\,\,\,\,\,\, {\rm and} \,\,\,\,\,\,\,\, \bigtriangleup_{M} N + |\sigma |^{2}N=0.
\label{equationsfn}
\end{equation}  
 
 \begin{definition}
 Let $\displaystyle J_{M}$ be the \textbf{Jacobi operator} of $M$ given  by $\displaystyle J_{M}=\bigtriangleup_{M}+|\sigma |^{2}+dr-2$ and its quadratic form as
 
 \begin{center}
 $\displaystyle \displaystyle Q(u,u)=-\int_{M}uJ_{M}(u)dV$,
 \end{center}
 where $\displaystyle u:M\rightarrow \mathbb{R}$.
 
 \begin{definition}
 The \textbf{Morse index} of $\displaystyle M$ is the number of negative eigenvalues of $\displaystyle J_{M}$ counting multiplicities. 
 \end{definition}

 \end{definition}

\section{Results}
\begin{proposition}
 Let $\displaystyle \bar{f}:\bar{M}\rightarrow  \mathbb{K}P^{r-1}$ be a compact oriented minimal hypersurface in $\displaystyle \mathbb{K}P^{r-1}$. Denote by 
  $\displaystyle f:M^{dr-2}\rightarrow S^{dr-1}\subset \mathbb{R}^{dr}=\mathbb{K}^{r}$ the uniquely defined compact hypersurface such that  $\displaystyle \Pi_{\mathbb{K}}(f(M))=\bar{f}(\bar{M})$. Then $\displaystyle f$ is oriented and minimal. 
  \label{propositionminimality}
\end{proposition}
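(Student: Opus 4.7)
The plan is to exploit the fact that $M = \Pi_{\mathbb{K}}^{-1}(\bar{M})$ is saturated under the $S^{d-1}$-action (so contains all fibers through it), and that the fibers of $\Pi_{\mathbb{K}}$ are totally geodesic great $(d-1)$-spheres in $S^{dr-1}$.

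First I would observe that $M$ is a smooth hypersurface: since $\Pi_{\mathbb{K}}$ is a submersion and $\bar{M}$ is a smooth hypersurface of $\mathbb{K}P^{r-1}$, the preimage $M = \Pi_{\mathbb{K}}^{-1}(\bar{M})$ is smooth of real codimension one in $S^{dr-1}$. Because $M$ is $S^{d-1}$-invariant, the vertical distribution $\mathcal{V}$ (tangent to the fibers) is entirely contained in $TM$. Consequently the unit normal of $M$ at any $p$ must be horizontal. For orientability: given the unit normal $\bar{N}$ of $\bar{M}$, let $N$ be its horizontal lift. By construction $N$ is a globally defined smooth unit horizontal vector field along $M$ that is orthogonal to $TM$ (it is orthogonal to $\mathcal{V}$ since it is horizontal, and to the horizontal lift of $T\bar{M}$ since $\bar{N} \perp T\bar{M}$). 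Hence $f$ is oriented.

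Next I would compute the mean curvature $H_M = \operatorname{tr}\, II_M$, with $II_M(X,Y) = \langle \nabla^{S^{dr-1}}_X N, Y\rangle$. At $p \in M$ pick an orthonormal basis of $T_pM$ consisting of a vertical part $\{V_i\}_{i=1}^{d-1}$ spanning the fiber direction, together with horizontal lifts $\{X_j\}$ of an orthonormal basis of $T_{\bar{p}}\bar{M}$. The key computation splits $H_M$ into a vertical and a horizontal trace.

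For the vertical trace: for $V, W$ vertical, $\langle \nabla^{S^{dr-1}}_V N, W\rangle = -\langle N, \nabla^{S^{dr-1}}_V W\rangle$. Because the fibers of $\Pi_{\mathbb{K}}$ are totally geodesic in $S^{dr-1}$ (they are great spheres in $S^{dr-1}$ under the Hopf action), $\nabla^{S^{dr-1}}_V W$ is again vertical, hence orthogonal to the horizontal vector $N$. Thus $\sum_i \langle \nabla^{S^{dr-1}}_{V_i} N, V_i\rangle = 0$.

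For the horizontal trace: by O'Neill's formulas for a Riemannian submersion, the horizontal component of $\nabla^{S^{dr-1}}_{X_j} N$ is the horizontal lift of $\bar{\nabla}_{\bar{X}_j} \bar{N}$, and since horizontal lifts preserve inner products, $\langle \nabla^{S^{dr-1}}_{X_j} N, X_j\rangle = \langle \bar{\nabla}_{\bar{X}_j} \bar{N}, \bar{X}_j\rangle$. Summing over $j$ gives $H_{\bar{M}}(\bar{p})$, which vanishes by the minimality of $\bar{M}$. Combining the two parts yields $H_M \equiv 0$, so $f$ is minimal.

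The main obstacle is simply the bookkeeping of decomposing the second fundamental form along the vertical and horizontal distributions and invoking the correct O'Neill identities; the totally geodesic character of the Hopf fibers is what makes the vertical trace vanish for free rather than contributing a correction term, and it is the one geometric fact worth emphasizing carefully.
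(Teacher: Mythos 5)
Your proposal is correct and follows essentially the same route as the paper: lift the unit normal $\bar{N}$ horizontally to get a global unit normal on $M$ (hence orientability), then split the trace of the second fundamental form into a horizontal part, identified with $\bar{H}=0$ via O'Neill's formulas, and a vertical part that vanishes because the Hopf fibers are totally geodesic great spheres. The paper phrases the vertical vanishing slightly more economically --- the fiber directions are tangent to great circles, so $\nabla_{e_j}e_j=0$ for each vertical basis vector --- but this is the same geometric fact you invoke.
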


\begin{proof}
Let $\displaystyle \bar{N}$ be a unitary normal vector field on $\displaystyle \bar{M}$. Consider the unique horizontal lift $\displaystyle N$ of $\displaystyle \bar{N}$ to $M$. Then $N$ is orthogonal to the fibers, $\displaystyle \Pi_{\mathbb{K}}$- related to $\displaystyle \bar{N}$ ($\displaystyle \Pi_{\mathbb{K}*}(N)=\bar{N}$) and normal to $M$ in $\displaystyle S^{dr-1}$.\\
Denote the second fundamental tensors by $\displaystyle B(v):=-(\nabla_{v}N)^{T}$ and $\displaystyle \bar{B}(w):=(\nabla_{w}\bar{N})^{T}$.  By O'Neills  Formulas \cite{o1966fundamental} we have for $\displaystyle u,v\in T_{p}M$ which are horizontal 
\begin{equation}
<B(u),v>=<\bar{B}(\Pi_{\mathbb{K}*}(u)), \Pi_{\mathbb{K}*}(v)>.
\end{equation}

Consider $\displaystyle \{\bar{e_{1}},\bar{e_{2}},...,\bar{e}_{d(r-1)-1}\}$ an orthonormal basis in $\displaystyle T_{\Pi_{\mathbb{K}}(p)}\bar{M}$. For each $\displaystyle \bar{e_{i}}$ consider its unique horizontal lift $\displaystyle e_{i}$, $\displaystyle i=1,...,d(r-1)-1$. Let $\displaystyle e_{d(r-1)},...,e_{dr-2}$ be the unit vectors tangent to the geodesic spheres $S^{1}$ which generate the orbit $S^{d-1}$ of   $\displaystyle \Pi_{\mathbb{K}}$. Therefore $\displaystyle \{e_{1},...,e_{dr-2}\}$ is an orthonormal basis in $\displaystyle T_{p}M$. If $\displaystyle H$ and $\displaystyle \bar{H}$ are the mean curvature of $\displaystyle M$ and $\displaystyle \bar{M}$ respectively, then

\begin{equation*}
\begin{split}
\displaystyle \displaystyle H=&\sum_{i=1}^{d(r-1)-1}<\nabla_{e_{i}}e_{i},N>+<\nabla_{e_{d(r-1)}}e_{d(r-1)},N>+...+<\nabla_{e_{dr-2}}e_{dr-2},N>\\ 
\displaystyle=&-\sum_{i=1}^{d(r-1)-1}<e_{i},\nabla_{e_{i}}N>+<\nabla_{e_{d(r-1)}}e_{d(r-1)},N>+...+<\nabla_{e_{dr-2}}e_{dr-2},N>\\
\displaystyle \displaystyle
=&\sum_{i=1}^{d(r-1)-1}<e_{i},B(e_{i})>+<\nabla_{e_{d(r-1)}}e_{d(r-1)},N>+...+<\nabla_{e_{dr-2}}e_{dr-2},N>\\
\displaystyle \displaystyle
=&\sum_{i=1}^{2r-3}<\bar{e}_{i},\bar{B}(\bar{e}_{i})>+<\nabla_{e_{d(r-1)}}e_{d(r-1)},N>+...+<\nabla_{e_{dr-2}}e_{dr-2},N>\\
\displaystyle =&\bar{H}+<\nabla_{e_{d(r-1)}}e_{d(r-1)},N>+...+<\nabla_{e_{dr-2}}e_{dr-2},N>.
\end{split}
\end{equation*} 
Since $\displaystyle e_{d(r-1)},...,e_{dr-2}$ are tangent vectors to a geodesic, then 
\begin{center}
$<\nabla_{e_{d(r-1)}}e_{d(r-1)},N>=...=<\nabla_{e_{dr-2}}e_{dr-2},N>=0$,
\end{center}
 Therefore $\displaystyle H=\bar{H}$.
\end{proof}

\begin{definition} Under the hypothesis of the last proposition, we define $\displaystyle u:M\rightarrow \mathbb{R}$ to be a \textbf{ $\displaystyle S^{d-1}$-equivariant function} if for $\displaystyle p$ and $\displaystyle q$ in the same orbit we have $\displaystyle u(p)=u(q)$. An eigenvalue is $\displaystyle S^{d-1}$-equivariant if its associated eigenfunction is $\displaystyle S^{d-1}$-equivariant. 

\end{definition}

\begin{remark}
The features of $\displaystyle J_{\bar{M}}$ in $\displaystyle \bar{M}$ give information about $\displaystyle J_{M}$. If $\displaystyle \bar{u}$ is a smooth function on $\displaystyle \bar{M}$, let $\displaystyle u$ be the $\displaystyle S^{d-1}$-equivariant function on $\displaystyle M$ given by $\displaystyle u=\bar{u}\circ \Pi_{\mathbb{K}}$. By the remark in section 4 in \cite{barbosa2012stability} $\displaystyle J_{M}(u)=(J_{\bar{M}}\bar{u})\circ \Pi_{\mathbb{K}}$. It is clear that if $\displaystyle u$ is a $\displaystyle S^{d-1}$-equivariant function on $\displaystyle M$, we can define a function $\displaystyle \bar{u}:\bar{M} \rightarrow \mathbb{R}$ such that  $\displaystyle u=\bar{u}\circ \Pi_{\mathbb{K}}$. In particular 

\begin{itemize}
\item set of eigenvalues of $\displaystyle J_{\bar{M}}\subset$ set of eigenvalues of $\displaystyle J_{M}$.

\item  
set of eigenvalues of $\displaystyle J_{\bar{M}}$= set of $\displaystyle S^{d-1}$-equivariant eigenvalues of $\displaystyle J_{M}$.
\end{itemize}
\label{remarkfeatures}
\end{remark}

\begin{lemma} Under the assumptions of the last proposition let $\bar{M}$ be a hypersurface with index one, $\displaystyle u:M\rightarrow \mathbb{R}$ be a smooth $\displaystyle S^{d-1}$- equivariant function and $\displaystyle \varphi$ the first eigenfunction of $\displaystyle J_{M}$ then

\begin{enumerate}

\item  $\displaystyle \varphi$ is $\displaystyle S^{d-1}$-equivariant.

\item Let $\displaystyle u$ be a  $\displaystyle S^{d-1}$- equivariant map, such that $\displaystyle \int_{M}\varphi udV=0$ then 
\begin{center}
$\displaystyle Q(u,u)\geq 0$.
\end{center}

\end{enumerate}
\label{features}
\end{lemma}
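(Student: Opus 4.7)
The plan is to exploit the fact that the free isometric $S^{d-1}$-action on $M$ commutes with $J_M$ (the potential $|\sigma|^2$ lifts from $\bar{M}$ and so is $S^{d-1}$-invariant), together with simplicity of the first eigenvalue of $J_M$ and the correspondence in Remark \ref{remarkfeatures}. Since $\bar{M}$ is connected and each fiber of $\Pi_{\mathbb{K}}|_M$ is a connected copy of $S^{d-1}$, the total space $M$ is connected; by the classical spectral theory of elliptic Schr\"odinger-type operators on compact connected manifolds, the first eigenvalue $\lambda_1(J_M)$ is simple and an associated eigenfunction $\varphi$ can be taken strictly positive.

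For (1), I would use an averaging argument. Define
\[
\tilde{\varphi}(p) := \int_{S^{d-1}} \varphi(g\cdot p)\, dg
\]
with respect to Haar measure on $S^{d-1}$. Because the action commutes with $J_M$, $\tilde{\varphi}$ is again an eigenfunction for $\lambda_1(J_M)$, and it is strictly positive, hence nonzero. Simplicity of $\lambda_1$ forces $\varphi = c\,\tilde{\varphi}$ for some constant $c$, so $\varphi$ is $S^{d-1}$-equivariant.

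For (2), I would first pinpoint $\lambda_1(J_M)$ inside the spectrum of $J_{\bar{M}}$. By (1) and Remark \ref{remarkfeatures}, $\varphi = \bar{\varphi}\circ\Pi_{\mathbb{K}}$ for some $\bar{\varphi}$ on $\bar{M}$ with $J_{\bar{M}}\bar{\varphi} = \lambda_1(J_M)\,\bar{\varphi}$. Since $\bar{\varphi}$ is strictly positive, it is a ground state of $J_{\bar{M}}$, so $\lambda_1(J_M) = \lambda_1(J_{\bar{M}})$. Because $\mathrm{index}(\bar{M}) = 1$, this is the unique negative eigenvalue of $J_{\bar{M}}$; by Remark \ref{remarkfeatures} again, it is also the unique negative $S^{d-1}$-equivariant eigenvalue of $J_M$. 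Now given $u = \bar{u}\circ\Pi_{\mathbb{K}}$ with $\int_M \varphi u\, dV = 0$, I would expand $\bar{u} = \sum_k a_k \bar{\varphi}_k$ in an $L^2(\bar{M})$-orthonormal basis of $J_{\bar{M}}$-eigenfunctions with eigenvalues $\bar{\lambda}_1 < 0 \le \bar{\lambda}_2 \le \cdots$. Fiber integration (the fibers of $\Pi_{\mathbb{K}}|_M$ all have the same volume $v_0$ by homogeneity of the action) yields
\[
\int_M u w\, dV_M = v_0 \int_{\bar{M}} \bar{u}\bar{w}\, dV_{\bar{M}},
\]
so the orthogonality hypothesis forces $a_1 = 0$. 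Using the identity $\int_M u J_M u\, dV = v_0 \int_{\bar{M}} \bar{u} J_{\bar{M}} \bar{u}\, dV$ from the remark, I would conclude
\[
Q(u,u) = -v_0 \sum_{k\ge 1} \bar{\lambda}_k a_k^2 \|\bar{\varphi}_k\|^2 = -v_0 \sum_{k\ge 2} \bar{\lambda}_k a_k^2 \|\bar{\varphi}_k\|^2 \le 0 \cdot (\text{wait, sign!}).
\]
Note $Q = -\int u J_M u$, and the sum $\sum_{k\ge 2}\bar{\lambda}_k a_k^2 \ge 0$ by non-negativity, so the minus sign flips and $Q(u,u)\ge 0$.

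The main obstacle I anticipate is identifying $\lambda_1(J_M)$ with $\lambda_1(J_{\bar{M}})$ rather than a higher eigenvalue of $J_{\bar{M}}$; this is what the strict positivity of $\varphi$ (obtained from the strong maximum principle/simplicity of the ground state) is used for. The spectral-expansion bookkeeping and fiber-integration constants are routine provided one keeps careful track of the sign conventions in the definition of $Q$.
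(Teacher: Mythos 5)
Your argument is correct and essentially the paper's: part (1) rests on simplicity of $\lambda_{1}(J_{M})$ together with the isometric $S^{d-1}$-action (the paper uses the individual isometries $A_{x}(p)=x\cdot p$ and the normalization $k^{2}=1$ where you average over the group, which is equivalent), and part (2) descends to $\bar{M}$ by fiber integration and invokes index one exactly as the paper does, with the useful extra step---left implicit in the paper---of identifying $\lambda_{1}(J_{M})=\lambda_{1}(J_{\bar{M}})$ via positivity of $\bar{\varphi}$. The only thing to clean up is the sign bookkeeping in your final display: with $Q(u,u)=-\int_{M}uJ_{M}u\,dV$ and the Morse index counted by negative eigenvalues, the consistent convention is $J_{\bar{M}}\bar{\varphi}_{k}=-\bar{\lambda}_{k}\bar{\varphi}_{k}$ with $\bar{\lambda}_{1}<0\le\bar{\lambda}_{2}\le\cdots$, which gives $Q(u,u)=v_{0}\sum_{k\ge 2}\bar{\lambda}_{k}a_{k}^{2}\|\bar{\varphi}_{k}\|^{2}\ge 0$ directly, with no minus sign left over to ``flip.''
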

 \begin{proof}

\begin{enumerate}

\item 

Let $\displaystyle x\in S^{d-1}$ be an arbitrary point in the sphere. Define the isometry $\displaystyle A_{x}:M\rightarrow M$  by $\displaystyle A_{x}(p)=x\cdot p$. The map $\displaystyle \varphi \circ A_{x}$ is another eigenfunction associated to the first eigenvalue $\displaystyle \lambda_{1}$ of $\displaystyle J_{M}$. The dimension of $\displaystyle \lambda_{1}$-space is one, then $\displaystyle k \varphi \circ A_{x} = \varphi$; where $\displaystyle k$ is a constant. Since $\displaystyle \varphi\geq 0$ then $\displaystyle k\geq 0$. On the other hand 
\begin{center}
$\displaystyle \displaystyle k^{2} \int_{M}( \varphi \circ A_{x})^{2}dV=\int_{M}\varphi^{2}dV$,
\end{center}
thus $\displaystyle k^{2}=1$. Therefore $\displaystyle k=1$ and $\displaystyle \varphi \circ A_{x} = \varphi$, i.e. $\displaystyle \varphi$ is $\displaystyle S^{d-1}$-equivariant.

\item
 Since $\displaystyle u$ is $\displaystyle S^{d-1}$-equivariant, there exists $\displaystyle \bar{u}:\bar{M}\rightarrow \mathbb{R}$ such that $\displaystyle u=\bar{u}\circ \Pi_{\mathbb{K}}$. Then 

\begin{center}

$\displaystyle \displaystyle |S^{d-1}|\int_{\bar{M}} \bar{\varphi}\bar{u}
d\bar{V}=\int_{M}\varphi udV=0$
\end{center}
and
\begin{center}
$\displaystyle \displaystyle 0\leq Q(\bar{u},\bar{u})=-\int_{\bar{M}}\bar{u}J_{\bar{M}}(\bar{u})
d\bar{V}=-\frac{1}{|S^{d-1}|}\int_{M}uJ_{M}(u)
dV$\\
$\displaystyle =\frac{1}{|S^{d-1}|}Q(u,u)$.
\end{center} 
where $|S^{d-1}|$ is the area of $S^{d-1}$.

\end{enumerate}

 \end{proof}

\begin{theorem}
The index of the Clifford hypersurfaces in $\displaystyle \mathbb{K}P^{r-1}$ is one. 
\end{theorem}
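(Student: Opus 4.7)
The plan is to work on the lift $M := \Pi_{\mathbb K}^{-1}(\bar M) = S^{n_1}_{R_1}\times S^{n_2}_{R_2}\subset S^{dr-1}$ and invoke Remark \ref{remarkfeatures}: the spectrum of $J_{\bar M}$ coincides, with matching multiplicities, with the $S^{d-1}$-equivariant part of the spectrum of $J_M$, so the index of $\bar M$ equals the number of $S^{d-1}$-equivariant positive eigenvalues of $J_M$ (counted with multiplicity). First, the minimality condition $n_1 R_2^2 = n_2 R_1^2$ together with $R_1^2+R_2^2=1$ forces $R_i^2 = n_i/n$, where $n := dr-2$, and the standard computation of the second fundamental form of a product of round spheres then gives $|\sigma|^2 = n_1 R_2^2/R_1^2 + n_2 R_1^2/R_2^2 = n$. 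Consequently $J_M = \Delta_M + 2n$.

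Next I would diagonalize $\Delta_M$ by separation of variables. The eigenfunctions are tensor products $\varphi_k\otimes\psi_l$ with $\varphi_k$ and $\psi_l$ spherical harmonics of degrees $k$ on $S^{n_1}_{R_1}$ and $l$ on $S^{n_2}_{R_2}$, and the corresponding eigenvalue of $J_M$ is
\[
\mu_{k,l} \;=\; 2n \;-\; \tfrac{n}{n_1}\,k(k+n_1-1) \;-\; \tfrac{n}{n_2}\,l(l+n_2-1).
\]
A direct check gives $\mu_{0,0}=2n$, $\mu_{1,0}=\mu_{0,1}=n$, $\mu_{1,1}=0$, and $\mu_{k,l}<0$ for every other pair $(k,l)$. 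Thus the positive spectrum of $J_M$ lives entirely inside the three eigenspaces indexed by $(0,0)$, $(1,0)$, $(0,1)$.

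Finally I would extract the $S^{d-1}$-invariant subspace inside each of these three eigenspaces. For $\mathbb K=\mathbb R$ ($d=1$) the group is $\mathbb Z/2$ acting by the antipodal map $(x,y)\mapsto(-x,-y)$, so $\varphi_k\otimes\psi_l$ has parity $(-1)^{k+l}$ and is equivariant iff $k+l$ is even; among the three candidates only $(0,0)$ passes. For $\mathbb K=\mathbb C$ ($d=2$) each $n_i+1$ is even, the factor $S^{n_i}_{R_i}$ sits in $\mathbb C^{(n_i+1)/2}$, and the $S^1$-action from $\Pi_{\mathbb C}$ is the standard diagonal one; the unitary decomposition of spherical harmonics $\mathcal H_k(S^{n_i}) = \bigoplus_{p+q=k}\mathcal H^{p,q}$ assigns $S^1$-weight $p-q$ to $\mathcal H^{p,q}$, so the diagonal $S^1$-action on a tensor $\varphi_k\otimes\psi_l$ admits invariants only when the total weight vanishes. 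In particular $(1,0)$ and $(0,1)$ carry no invariants (degree one has only weights $\pm 1$), while the constants in $(0,0)$ are trivially invariant, so again only $(0,0)$ survives. In either case the $S^{d-1}$-invariant positive spectrum of $J_M$ is one-dimensional, so $\bar M$ has index one.

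The main obstacle is the complex case, where one has to verify that the $S^1$-action inherited from $\Pi_{\mathbb C}$ is exactly the diagonal Hopf action on the two complex factors and then argue cleanly with the $U$-module structure of the spherical harmonics on an odd-dimensional sphere; the real case and the spectral computation on a product of round spheres are routine.
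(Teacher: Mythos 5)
Your argument is correct and is essentially the paper's proof: lift to $S^{n_1}_{R_1}\times S^{n_2}_{R_2}$, compute $|\sigma|^2=dr-2$ so that $J_M=\Delta_M+2(dr-2)$, enumerate the product eigenvalues to see that only the $(0,0)$, $(1,0)$, $(0,1)$ eigenspaces could contribute unstable directions, and then discard the degree-one harmonics by equivariance, leaving only the constants. The one cosmetic difference is that you rule out the degree-one harmonics in the complex case via the weight decomposition of spherical harmonics under the diagonal $S^1$-action, whereas the paper uses the single uniform observation that $-\mathrm{id}\in S^{d-1}$ forces any equivariant function to be even under the antipodal map; the latter is shorter and also handles the quaternionic case (which your write-up does not explicitly cover) at no extra cost.
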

\begin{proof}
Let $\displaystyle \Pi_{\mathbb{K}}(S^{n_{1}}_{R_{1}}\times S^{n_{2}}_{R_{2}})$ be a typical minimal Clifford hypersurface, where $\displaystyle n_{1}+n_{2}=dr-2$, $\displaystyle R_{1}^{2}+R_{2}^{2}=1$, $\displaystyle n_{1}R_{2}^{2}=n_{2}R_{1}^{2}$ and $n_{1}\equiv -1 \mod d$. We look for $\displaystyle S^{d-1}$-equivariant negative eigenvalues of $\displaystyle J_{M}$,  where $\displaystyle M=S^{n_{1}}_{R_{1}}\times S^{n_{2}}_{R_{2}}$ (see Remark \ref{remarkfeatures}). In the manifold $\displaystyle M$ we have

\begin{center}
$\displaystyle \displaystyle |\sigma|^{2}=n_{1}\frac{R_{2}^{2}}{R_{1}^{2}}+n_{2}\frac{R_{1}^{2}}{R_{2}^{2}}=\frac{n_{2}R_{1}^{2}}{R_{1}^{2}}+\frac{n_{1}R_{2}^{2}}{R_{2}^{2}}=n_{2}+n_{1}=dr-2.$
\end{center}

Since $\displaystyle J_{M}=\bigtriangleup_{M}+2(dr-2)$, we look for $\displaystyle S^{d-1}$-equivariant eigenvalues $\displaystyle \beta$ of $\displaystyle \bigtriangleup_{M}$, such that $\displaystyle \beta<2(dr-2)$. The eigenvalues of  $\displaystyle \bigtriangleup_{M}$ are given by

\begin{center}
$\displaystyle \displaystyle \beta_{k_{1}k_{2}}=\frac{k_{1}(k_{1}+n_{1}-1)}{R_{1}^{2}}+\frac{k_{2}(k_{2}+n_{2}-1)}{R_{2}^{2}}$
\end{center} 
where $\displaystyle k_{1},k_{2}\in  \mathbb{N}^{0}$.

Notice that $\displaystyle \beta_{00}=0$ corresponds to the constant functions which are $\displaystyle S^{d-1}$-equivariant. When $\displaystyle k_{1}=1=k_{2}$, 

\begin{center}
$\displaystyle \beta_{11}=\frac{n_{1}}{R_{1}^{2}}+\frac{n_{2}}{R_{2}^{2}}=n_{1}(1+\frac{R_{2}^{2}}{R_{1}^{2}})+n_{2}(1+\frac{R_{1}^{2}}{R_{2}^{2}})=n_{1}\frac{R_{2}^{2}}{R_{1}^{2}}+n_{2}\frac{R_{1}^{2}}{R_{2}^{2}}+n_{1}+n_{2}=2(dr-2)$.
\end{center}
Therefore we get rid of $\displaystyle \beta_{k_{1}k_{2}}$, when $\displaystyle k_{1}\geq 1$ and $\displaystyle k_{2}\geq 1$. Using the same argument given in \cite{barbosa2012stability} the eigenfunctions corresponding to $\displaystyle \beta_{10}$(respectively $\displaystyle \beta_{01}$) are linear functions on $\displaystyle \mathbb{R}^{n_{1}+1}$(respectively $\displaystyle \mathbb{R}^{n_{2}+1}$) which are never $\displaystyle S^{d-1}$-equivariant since $\displaystyle -id\in S^{d-1}$. 
It suffices to show that $\displaystyle \beta_{20}$ and $\displaystyle \beta_{02}$  are greater than $\displaystyle \beta_{11}$. Since $\displaystyle n_{1}R_{2}^{2}=n_{2}R_{1}^{2}$ 
\begin{equation}
\frac{R_{2}^{2}}{R_{1}^{2}}=\frac{n_{2}}{n_{1}}\geq \frac{n_{2}}{n_{1}+2}
\label{des1}
\end{equation}
\begin{equation}
\frac{R_{2}^{2}}{R_{1}^{2}}=\frac{n_{2}}{n_{1}}\leq \frac{n_{2}+2}{n_{1}}.
\label{des2}
\end{equation}
But equations (\ref{des1}) and (\ref{des2}) are equivalent to

\begin{center}
$\displaystyle \displaystyle \beta_{20}=\frac{2(n_{1}+1)}{R_{1}^{2}}\geq \frac{n_{1}}{R_{1}^{2}}+\frac{n_{2}}{R_{2}^{2}}=\beta_{11}$
\end{center}

\begin{center}
$\displaystyle \displaystyle \beta_{02}=\frac{2(n_{2}+1)}{R_{2}^{2}}\geq \frac{n_{1}}{R_{1}^{2}}+\frac{n_{2}}{R_{2}^{2}}=\beta_{11}$.
\end{center}

\end{proof}

\section{Application}
Let $\displaystyle (M^{n+1},g)$ be a Riemannian manifold connected closed orientable and $\displaystyle H^{n}$ the $\displaystyle n$ dimensional Hausdorff measure. When $\displaystyle \Sigma^{n}$ is a submanifold, we use $\displaystyle |\Sigma|$ to denote $\displaystyle H^{n}(\Sigma)$. Let $Z_{n}(M^{n+1})$ be the space of integral cycles and $\Phi:[0,1]\rightarrow Z_{n}(M^{n+1})$ (which will be denoted by $\{\Phi_{s}\}_{s=0}^{1}$) be a sweepout. The reader can find in Section 4 of \cite{zhou2017min} the precise definition of sweepouts in Almgren-Pitts min-max theory. Denote by $\Pi$ all the mappings $\Lambda$ which are homotopic to $\Phi$ in $Z_{n}(M^{n+1})$. We define the width of $M^{n+1}$ as\\

\begin{center}
$\displaystyle W(M):=\inf_{\Lambda \in \Pi}\{\max_{x\in[0,1]}\mathcal{H}^{n}(\Lambda(x))\}$.
\end{center}

\begin{theorem}\cite{ketover2016catenoid, marques2012rigidity, maleja1,  zhou2015min}
Let $\displaystyle M^{n+1}$ be connected closed orientable Riemannian manifold with positive Ricci curvature. Then 
\begin{itemize}
\item the min-max minimal hypersurface $\displaystyle \Sigma$ is orientable of multiplicity one, which has Morse index one and $\displaystyle |\Sigma|=W(M)$.
\item $\displaystyle \displaystyle W(M)=\min_{\sigma \in S} \Big\{ |\sigma|\,\,\, {\rm if}\,\,\, \sigma \,\,\, {\rm is \,\,\, orientable\,\,\, or}\,\,\, 2A(\sigma)\,\,\, {\rm if}\,\,\, \sigma \,\,\, {\rm is \,\,\, non-orientable}\,\,\, \Big\}$,
\end{itemize}
where
\begin{center}
$\displaystyle S=\{\sigma^{n}\subset M^{n+1}: \sigma {\rm \,\,\,is\,\,\, a\,\,\, minimal\,\,\, hypersurface\,\,\, in}\,\,\, M\}$.
\end{center}
\label{xin}
\end{theorem}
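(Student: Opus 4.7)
The plan is to assemble the theorem by combining the four cited results in a definite order: existence of $\Sigma$ via Almgren--Pitts, a Morse index upper bound, a dichotomy for one-sidedness, and finally the explicit width formula. Throughout, let $\Sigma$ denote the min-max minimal hypersurface produced by Almgren--Pitts min-max theory applied to the sweepout $\Phi$; it is closed, embedded, smooth outside a set of Hausdorff dimension at most $n-7$, and satisfies $|\Sigma| = W(M)$.

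For the first bullet, I would first bound the Morse index of the support of $\Sigma$ by one using the Marques--Neves one-parameter index estimate. Positive Ricci curvature prohibits closed stable minimal hypersurfaces (a direct second-variation computation with a constant normal test function yields strict instability), so the index is exactly one. Next, Zhou's dichotomy \cite{zhou2017min} forces one of two alternatives: either $\Sigma$ is two-sided of multiplicity one with Morse index one, or $\Sigma$ appears with multiplicity two as the connected double cover of a one-sided minimal hypersurface whose orientation double cover is stable. To eliminate the second alternative I would invoke the author's result \cite{maleja1}, which rules out the one-sided alternative in arbitrary dimension; this leaves exactly the content of the first bullet.

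For the second bullet, I would prove the two inequalities separately. For $W(M) \le \min$: given any two-sided closed minimal hypersurface $\sigma$, its first Jacobi eigenfunction yields a normal deformation of $\sigma$ that extends (in both signed directions) to a homotopically nontrivial one-parameter family of cycles achieving maximum area $|\sigma|$ precisely at $\sigma$. Given a one-sided closed minimal hypersurface $\sigma$, the boundaries of $\varepsilon$-tubular neighborhoods of $\sigma$ define a sweepout whose maximal slice is a connected double cover of $\sigma$ with area converging to $2|\sigma|$ as $\varepsilon \to 0$. Either way, $W(M)$ is bounded above by the corresponding quantity. Conversely, by the first bullet $\Sigma$ is itself an orientable element of $S$ realizing $|\Sigma| = W(M)$, so $W(M)$ dominates the minimum as well; combining the two inequalities yields equality.

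The main obstacle in this program is eliminating the one-sided alternative in Zhou's dichotomy, i.e.\ the appeal to \cite{maleja1}. The Ketover--Marques--Neves catenoid estimates of \cite{ketover2016catenoid} handle this in the range $4 \leq n+1 \leq 7$ through a careful analysis of how a multiplicity-two sweepout can be strictly improved, but the general-dimensional argument is substantially more delicate and constitutes the technical heart of the theorem. The sweepout constructions in the upper-bound direction are standard in the Almgren--Pitts framework, but one must verify that the resulting families are genuinely homotopic to $\Phi$ (rather than merely homotopic through a different sweepout class) in order to conclude that they compete in the min-max problem defining $W(M)$.
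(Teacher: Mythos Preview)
The paper does not prove this theorem at all: it is stated with citations to \cite{ketover2016catenoid, marques2012rigidity, maleja1, zhou2015min} and used as a black box in the Application section. Your proposal is therefore not to be compared against any argument in the paper, but rather against the literature it cites, and in that respect your assembly is essentially the intended one. The introduction of the paper confirms your reading of how the pieces fit together: Zhou supplies the dichotomy, Ketover--Marques--Neves eliminate the one-sided alternative for $4\le n+1\le 7$, and \cite{maleja1} does so in general dimension.

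One small correction in your sketch of the upper bound $W(M)\le |\sigma|$: the standard construction (as in Zhou's paper) does not use the first Jacobi eigenfunction to build the sweepout. Rather, one takes the family of equidistant hypersurfaces to $\sigma$ (level sets of the signed distance function) and uses the positive Ricci assumption, via a Heintze--Karcher or direct second-variation argument, to show that every slice other than $\sigma$ has strictly smaller area. The Jacobi-eigenfunction variation decreases area only locally and does not by itself yield a sweepout of all of $M$; the distance-function construction does. With that adjustment your outline is correct.
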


\begin{remark}
\label{twoside}
The proof of the previous theorem also applies if we get rid of the assumption that $M$ is orientable, we consider the Almgren-Pitts width with $\mathbb{Z}_{2}$ coefficients and replace the orientability of  $\Sigma $  by two-sidedness.
\end{remark}

\begin{theorem} Let $\displaystyle \Sigma_{i}$ be  the min-max hypersurface in $\displaystyle \mathbb{C}P^{i}$, $\displaystyle i\in\{2,3\}$. Then   
\begin{center}
$\displaystyle \displaystyle W(\mathbb{C}P^{i})=|\Sigma_{i}|\leq\left\{ \begin{array}{lcc}
             |\Pi_{\mathbb{C}}(S^{1}_{\sqrt{\frac{1}{4}}}\times S^{3}_{\sqrt{\frac{3}{4}}})|=\frac{3\sqrt{3}\pi^{2}}{8} &  {\rm if}  & i=2 \\
              |\Pi_{\mathbb{C}}(S^{3}_{\sqrt{\frac{3}{6}}}\times S^{3}_{\sqrt{\frac{3}{6}}})|=\frac{\pi^{3}}{4} &  {\rm if} & i=3.
             \end{array}
   \right.$
\end{center}
\end{theorem}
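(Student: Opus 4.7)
My plan is to deduce the upper bound directly from Theorem \ref{xin} by producing the stated Clifford as an explicit competitor. The Fubini--Study metric makes $\mathbb{C}P^i$ closed, connected, orientable and Einstein with positive Ricci curvature, so Theorem \ref{xin} (together with Remark \ref{twoside}) applies and yields both $|\Sigma_i|=W(\mathbb{C}P^i)$ and
$$W(\mathbb{C}P^i)=\min_{\sigma\in S}\bigl\{\,|\sigma|\text{ if }\sigma\text{ is orientable},\ 2|\sigma|\text{ if }\sigma\text{ is non-orientable}\bigr\}.$$
It therefore suffices to exhibit a single orientable minimal hypersurface attaining the right-hand area.

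The natural candidate is $\bar M_i:=\Pi_{\mathbb{C}}(S^{n_1}_{R_1}\times S^{n_2}_{R_2})$ with $(n_1,n_2,R_1,R_2)=(1,3,\tfrac{1}{2},\tfrac{\sqrt{3}}{2})$ when $i=2$ and $(3,3,\tfrac{1}{\sqrt{2}},\tfrac{1}{\sqrt{2}})$ when $i=3$. In both cases $n_1$ is odd, so the Hopf $S^1$-action on $\mathbb{C}^{i+1}$ restricts to a free action on the product and $\bar M_i$ is a smooth embedded hypersurface. Minimality of $S^{n_1}_{R_1}\times S^{n_2}_{R_2}\subset S^{2i+1}$ follows from the preliminary remark characterising minimal Cliffords by $n_1R_2^2=n_2R_1^2$, and the identity $H=\bar H$ from the proof of Proposition \ref{propositionminimality} descends this to minimality of $\bar M_i\subset\mathbb{C}P^i$. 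Orientability (equivalently, two-sidedness) is immediate: $\bar M_i$ is the level set $\{|p|^2=R_1^2\}$ of a smooth $S^1$-invariant function on the orientable $\mathbb{C}P^i$.

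The area computation uses the submersion structure. The $S^1$-orbit through $(p,q)\in S^{n_1}_{R_1}\times S^{n_2}_{R_2}$ has tangent $(ip,iq)$ of unit length in $S^{2i+1}$, so the fibers of $\Pi_{\mathbb{C}}|_{\bar M_i}$ are great circles of length $2\pi$, and the coarea formula yields
$$|\bar M_i|=\frac{|S^{n_1}_{R_1}|\,|S^{n_2}_{R_2}|}{2\pi}.$$
Plugging in the two sets of parameters produces $\tfrac{3\sqrt{3}\pi^2}{8}$ and $\tfrac{\pi^3}{4}$ respectively, which together with the min-max inequality above finishes the proof.

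The main obstacle in this argument is conceptual rather than technical, and it is exactly why the theorem is stated with $\le$ rather than $=$. Upgrading to equality would require ruling out any minimal hypersurface of $\mathbb{C}P^i$ of strictly smaller area (or smaller doubled area in the non-orientable case) than the displayed Clifford; via the index-one characterisation supplied by Theorem \ref{xin}, this reduces to Conjecture \ref{maintheoremcomplex} combined with a direct area comparison among the remaining Clifford candidates (for $i=3$, between the $(3,3)$-Clifford and the $(1,5)$-Clifford), which is precisely the open problem the author isolates.
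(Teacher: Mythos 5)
Your proposal is correct and follows essentially the same route as the paper: invoke Theorem \ref{xin} on the positively curved $\mathbb{C}P^i$ to get $W(\mathbb{C}P^i)=|\Sigma_i|$ together with the min-over-minimal-hypersurfaces characterisation, then exhibit the Clifford hypersurface as an orientable minimal competitor and compute its area via the factor-of-$2\pi$ relation $2\pi|\Pi_{\mathbb{C}}(S^{n_1}_{R_1}\times S^{n_2}_{R_2})|=|S^{n_1}_{R_1}\times S^{n_2}_{R_2}|$. The only (immaterial) differences are that you verify the competitor's smoothness, minimality and fibre lengths in more detail than the paper does, and that for $i=3$ you exhibit only the area-minimising $(3,3)$-Clifford rather than listing both candidates and taking the minimum; your closing observation that equality hinges on Conjecture \ref{maintheoremcomplex} matches the paper's own remark.
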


\begin{proof} From Theorem \ref{xin} $\displaystyle \Sigma_{i}$  has Morse index one and 
\begin{center}
$\displaystyle W(\mathbb{C}P^{i})=|\Sigma_{i}|$.
\end{center}
The minimal Clifford hypersurfaces  in $\displaystyle \mathbb{C}P^{r-1}$ are $\displaystyle \Pi_{\mathbb{C}}(S^{n_{1}}_{R_{1}}\times S_{R_{2}}^{n_{2}})$ such that
\begin{center} $\displaystyle n_{1}+n_{2}=2r-2$, $\displaystyle R_{1}=\sqrt{\frac{n_{1}}{2r-2}}$, $\displaystyle R_{2}=\sqrt{\frac{n_{2}}{2r-2}}$, $n_{1}$, $n_{2}$ odd
\end{center}
 and
\begin{equation}
2\pi |\Pi_{\mathbb{C}}(S^{n_{1}}_{R_{1}}\times S_{R_{2}}^{n_{2}})|=|S^{n_{1}}_{R_{1}}\times S_{R_{2}}^{n_{2}}|=
\frac{4\pi^{\frac{n_{1}+n_{2}+2}{2}} R_{1}^{n_{1}} R_{2}^{n_{2}}}{\Gamma(\frac{n_{1}+1}{2})\Gamma(\frac{n_{2}+1}{2})}.
\label{areasclifford} 
\end{equation}

\begin{itemize} 
 \item In the case $\displaystyle \mathbb{C}P^{2}$ we only have one candidate to be the minimal Clifford hypersurface, 

\begin{center}
$\displaystyle \Pi_{\mathbb{C}}(S^{1}_{\sqrt{\frac{1}{4}}}\times S^{3}_{\sqrt{\frac{3}{4}}})$.
\end{center}
From equation (\ref{areasclifford})
\begin{center}
$\displaystyle \displaystyle |\Pi_{\mathbb{C}}(S^{1}_{\sqrt{\frac{1}{4}}}\times S^{3}_{\sqrt{\frac{3}{4}}})|=\frac{3\sqrt{3}\pi^{2}}{8},$
\end{center}
thus

\begin{center} $\displaystyle W(\mathbb{C}P^{2})\leq \frac{3\sqrt{3}\pi^{2}}{8}$.\end{center}

\item  In the case $\displaystyle \mathbb{C}P^{3}$ we have two candidate to be the minimal Clifford hypersurface,

\begin{center}
$\displaystyle \Pi_{\mathbb{C}}(S^{1}_{\sqrt{\frac{1}{6}}}\times S^{5}_{\sqrt{\frac{5}{6}}})$ and 
$\displaystyle \Pi_{\mathbb{C}}(S^{3}_{\sqrt{\frac{3}{6}}}\times S^{3}_{\sqrt{\frac{3}{6}}})$.
\end{center}
From equation (\ref{areasclifford})
\begin{center}
$\displaystyle \displaystyle |\Pi_{\mathbb{C}}(S^{1}_{\sqrt{\frac{1}{6}}}\times S^{5}_{\sqrt{\frac{5}{6}}})|=\frac{25\sqrt{5}\pi^{3}}{216}$
\end{center}

\begin{center}
$\displaystyle \displaystyle |\Pi_{\mathbb{C}}(S^{3}_{\sqrt{\frac{3}{6}}}\times S^{3}_{\sqrt{\frac{3}{6}}})|=\frac{\pi^{3}}{4}$
\end{center}
thus

\begin{center}
 $\displaystyle \displaystyle W(\mathbb{C}P^{3})\leq\min \{\frac{25\sqrt{5}\pi^{3}}{216}, \frac{\pi^{3}}{4} \}=\frac{\pi^{3}}{4}$.
\end{center}
\end{itemize}
\end{proof}
\begin{remark} In the last proof we got rid of $S^{4}_{1}$, $\displaystyle S^{2}_{\sqrt{\frac{2}{4}}}\times S^{2}_{\sqrt{\frac{2}{4}}}$, $S^{6}_{1}$ and $\displaystyle S^{4}_{\sqrt{\frac{4}{6}}} \times S^{2}_{\sqrt{\frac{2}{6}}}$ because they are not $\displaystyle S^{1}$-equivariant, even after a rotation.
\end{remark}

\begin{theorem} Let $\displaystyle \Sigma_{i}$ be the min-max hypersurface in $\displaystyle \mathbb{R}P^{i}$; $i\in \mathbb{N}$.  Then $\displaystyle \Sigma_{i}$ is the minimal Clifford hypersurface of least area. In particular, 

\begin{center}
$\displaystyle W(\mathbb{R}P^{i})=|\Sigma_{i}|=\left\{ \begin{array}{lcc}
              |\Pi_{\mathbb{R}}(S^{1}_{\sqrt{\frac{1}{2}}}\times S^{1}_{\sqrt{\frac{1}{2}}})|=\pi^{2} &  {\rm if}  & i=3 \\
              |\Pi_{\mathbb{R}}(S^{1}_{\sqrt{\frac{1}{3}}}\times S^{2}_{\sqrt{\frac{2}{3}}})|=\frac{8\pi^{2}}{3\sqrt{3}} &  {\rm if}  & i=4\\
              |\Pi_{\mathbb{R}}(S^{2}_{\sqrt{\frac{2}{4}}}\times S^{2}_{\sqrt{\frac{2}{4}}})|=2\pi^{2} &  {\rm if} & i=5\\
               |\Pi_{\mathbb{R}}(S^{2}_{\sqrt{\frac{2}{5}}}\times S^{3}_{\sqrt{\frac{3}{5}}})|= \frac{24}{25}\sqrt{\frac{3}{5}}\pi^{3} &  {\rm if} & i=6\\
              |\Pi_{\mathbb{R}}(S^{3}_{\sqrt{\frac{3}{6}}}\times S^{3}_{\sqrt{\frac{3}{6}}})|=\frac{\pi^{4}}{4} &  {\rm if} & i=7.
             \end{array}
   \right.$
\end{center}
\end{theorem}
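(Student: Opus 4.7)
The plan is to combine the min-max characterization of the width in Theorem \ref{xin} (via Remark \ref{twoside}) with the do Carmo--Ritor\'e--Ros classification (Theorem \ref{maintheoremreal}), and then perform an explicit area comparison. Since $\mathbb{R}P^{i}$ has constant sectional curvature $1$ and hence positive Ricci curvature, Theorem \ref{xin} together with Remark \ref{twoside} shows that $\Sigma_i$ is a two-sided, multiplicity-one, Morse index one minimal hypersurface and
\begin{equation*}
|\Sigma_i|=W(\mathbb{R}P^i) = \min_{\sigma\in S}\Bigl\{|\sigma|\,\text{ if $\sigma$ is two-sided},\ 2A(\sigma)\,\text{ if $\sigma$ is one-sided}\Bigr\}.
\end{equation*}
By Theorem \ref{maintheoremreal} every index one minimal hypersurface of $\mathbb{R}P^i$ is either a totally geodesic $\mathbb{R}P^{i-1}$ or a minimal Clifford hypersurface $\Pi_{\mathbb{R}}(S^{n_1}_{R_1}\times S^{n_2}_{R_2})$ with $n_1+n_2=i-1$ and $R_j^2=n_j/(i-1)$; the totally geodesic $\mathbb{R}P^{i-1}$ is one-sided in $\mathbb{R}P^i$ (its normal bundle is the tautological line bundle), so it contributes $|S^{i-1}|$ to the minimum above.

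I would then verify that every minimal Clifford hypersurface is two-sided in $\mathbb{R}P^i$: at a lift point $p=(R_1 u, R_2 v)$ of $S^{n_1}_{R_1}\times S^{n_2}_{R_2}\subset S^i$, the unit normal is $N(p)=(R_2 u, -R_1 v)$, and one directly checks $N(-p)=-N(p)=d\tau_p(N(p))$, so $N$ is $\mathbb{Z}_2$-equivariant and descends to a global unit normal on the quotient. Hence $\Sigma_i$ must be a minimal Clifford hypersurface of least area. For each $i\in\{3,4,5,6,7\}$, I would then enumerate the pairs $(n_1,n_2)$ with $n_1+n_2=i-1$ and $n_1,n_2\geq 1$ (at most three such pairs), compute
\begin{equation*}
|\Pi_{\mathbb{R}}(S^{n_1}_{R_1}\times S^{n_2}_{R_2})|= \frac{2\pi^{(n_1+n_2+2)/2}R_1^{n_1}R_2^{n_2}}{\Gamma\bigl(\tfrac{n_1+1}{2}\bigr)\Gamma\bigl(\tfrac{n_2+1}{2}\bigr)}
\end{equation*}
with $R_j=\sqrt{n_j/(i-1)}$, and select the minimum. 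The balanced or near-balanced pairs $(1,1)$, $(1,2)$, $(2,2)$, $(2,3)$, $(3,3)$ are the minimizers in the respective dimensions, giving the stated values; a final numerical check that each of them is strictly less than $|S^{i-1}|$ eliminates the totally geodesic competitor.

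The main obstacle is the routine computational bookkeeping of the pairwise area comparison; the only conceptually delicate step is the verification that every minimal Clifford hypersurface is two-sided in $\mathbb{R}P^i$, which is what forces $\Sigma_i$ to be a Clifford rather than the min-max double cover of the one-sided equator $\mathbb{R}P^{i-1}$, and thereby upgrades the upper bound obtained from sweepouts to the equality claimed in the theorem.
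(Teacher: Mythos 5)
Your proposal is correct and follows essentially the same route as the paper: Theorem \ref{xin} together with Remark \ref{twoside} gives that $\Sigma_i$ is two-sided of index one with $|\Sigma_i|=W(\mathbb{R}P^i)$, Theorem \ref{maintheoremreal} reduces to the Clifford family, and the explicit area comparison (your formula agrees with (\ref{areascliffordreal}) after accounting for the factor $2$ from the double cover) yields the stated minimizers. The only difference is that you explicitly rule out the totally geodesic competitor via its one-sidedness, the $2A(\sigma)$ term, and the check that the least Clifford area is below $|S^{i-1}|$, and you verify two-sidedness of the Clifford hypersurfaces; the paper leaves these points implicit, so your version is, if anything, more complete.
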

\begin{proof}
By Theorem \ref{xin} if $i$ is odd and Remark \ref{twoside} if $i$ even, we have $\displaystyle \Sigma_{i}$ has index one and 
\begin{center}
$\displaystyle W(\mathbb{R}P^{i})=|\Sigma_{i}|.$
\end{center}
Therefore using Theorem $\displaystyle \ref{maintheoremreal}$ $\displaystyle \Sigma_{i}$ has to be a minimal Clifford hypersurface. The minimal Clifford hypersurfaces  in $\displaystyle \mathbb{R}P^{r-1}$ are $\displaystyle \Pi_{\mathbb{R}}(S^{n_{1}}_{R_{1}}\times S_{R_{2}}^{n_{2}})$ such that
\begin{center}
 $\displaystyle n_{1}+n_{2}=r-2$, $\displaystyle R_{1}=\sqrt{\frac{n_{1}}{r-2}}$, $\displaystyle R_{2}=\sqrt{\frac{n_{2}}{r-2}}$,
\end{center} 
and
\begin{equation}
2 |\Pi_{\mathbb{R}}(S^{n_{1}}_{R_{1}}\times S_{R_{2}}^{n_{2}})|=|S^{n_{1}}_{R_{1}}\times S_{R_{2}}^{n_{2}}|=
\frac{4\pi^{\frac{n_{1}+n_{2}+2}{2}} R_{1}^{n_{1}}R_{2}^{n_{2}}}{\Gamma(\frac{n_{1}+1}{2})\Gamma(\frac{n_{2}+1}{2})}.
\label{areascliffordreal} 
\end{equation}

\begin{itemize} 
 \item In the case $\displaystyle \mathbb{R}P^{3}$ we only have one candidate to be the minimal Clifford hypersurface, 

\begin{center}
$\displaystyle \Pi_{\mathbb{R}}(S^{1}_{\sqrt{\frac{1}{2}}}\times S^{1}_{\sqrt{\frac{1}{2}}}).$
\end{center}
From equation (\ref{areascliffordreal})
\begin{center}
$\displaystyle \displaystyle |\Pi_{\mathbb{R}}(S^{1}_{\sqrt{\frac{1}{2}}}\times S^{1}_{\sqrt{\frac{1}{2}}})|=\pi^{2}$
\end{center}
thus 
\begin{center}
$\displaystyle W(\mathbb{R}P^{3})= \pi^{2}.$
\end{center}

\item  In the case $\displaystyle \mathbb{R}P^{4}$ we only have one candidate to be the minimal Clifford hypersurface,

\begin{center}
$\displaystyle \displaystyle \Pi_{\mathbb{R}}(S^{1}_{\sqrt{\frac{1}{3}}}\times S^{2}_{\sqrt{\frac{2}{3}}})$
\end{center}
From equation (\ref{areascliffordreal})
\begin{center}
$\displaystyle \displaystyle |\Pi_{\mathbb{R}}(S^{1}_{\sqrt{\frac{1}{3}}}\times S^{2}_{\sqrt{\frac{2}{3}}})|=\frac{8\pi^{2}}{3\sqrt{3}}$
\end{center}
thus
\begin{center}
 $\displaystyle \displaystyle W(\mathbb{R}P^{4})=\frac{8\pi^{2}}{3\sqrt{3}}.$
\end{center}

\item  In the case $\displaystyle \mathbb{R}P^{5}$ we have two candidate to be the minimal Clifford hypersurface,

\begin{center}
$\displaystyle \Pi_{\mathbb{R}}(S^{1}_{\sqrt{\frac{1}{4}}}\times S^{3}_{\sqrt{\frac{3}{4}}})$ and 
$\displaystyle \Pi_{\mathbb{R}}(S^{2}_{\sqrt{\frac{2}{4}}}\times S^{2}_{\sqrt{\frac{2}{4}}})$
\end{center}
From equation (\ref{areascliffordreal})
\begin{center}
$\displaystyle \displaystyle |\Pi_{\mathbb{R}}(S^{1}_{\sqrt{\frac{1}{4}}}\times S^{3}_{\sqrt{\frac{3}{4}}})|=\frac{3\sqrt{3}\pi^{3}}{8}$
\end{center}

\begin{center}
$\displaystyle \displaystyle |\Pi_{\mathbb{R}}(S^{2}_{\sqrt{\frac{2}{4}}}\times S^{2}_{\sqrt{\frac{2}{4}}})|=2\pi^{2}$
\end{center}
thus

\begin{center}
 $\displaystyle \displaystyle W(\mathbb{R}P^{5})=\min \{2\pi^{2}, \frac{3\sqrt{3}\pi^{3}}{8}\}=2\pi^{2}.$
\end{center}


\item  In the case $\displaystyle \mathbb{R}P^{6}$ we have two candidate to be the minimal Clifford hypersurface,

\begin{center}
$\displaystyle \Pi_{\mathbb{R}}(S^{1}_{\sqrt{\frac{1}{5}}}\times S^{4}_{\sqrt{\frac{4}{5}}})$ and 
$\displaystyle \Pi_{\mathbb{R}}(S^{2}_{\sqrt{\frac{2}{5}}}\times S^{3}_{\sqrt{\frac{3}{5}}})$
\end{center}
From equation (\ref{areascliffordreal})
\begin{center}
$\displaystyle \displaystyle |\Pi_{\mathbb{R}}(S^{1}_{\sqrt{\frac{1}{5}}}\times S^{4}_{\sqrt{\frac{4}{5}}})|=\frac{128\pi^{3}}{75 \sqrt{5}}$
\end{center}

\begin{center}
$\displaystyle \displaystyle |\Pi_{\mathbb{R}}(S^{2}_{\sqrt{\frac{2}{5}}}\times S^{3}_{\sqrt{\frac{3}{5}}})|=\frac{24}{25}\sqrt{\frac{3}{5}}\pi^{3}$
\end{center}
thus

\begin{center}
 $\displaystyle \displaystyle W(\mathbb{R}P^{6})=\min \{\frac{128\pi^{3}}{75 \sqrt{5}}, \frac{24}{25}\sqrt{\frac{3}{5}}\pi^{3}\}=\frac{24}{25}\sqrt{\frac{3}{5}}\pi^{3}.$
\end{center}


\item  In the case $\displaystyle \mathbb{R}P^{7}$ we have three candidates to be the minimal Clifford hypersurface,

\begin{center}
$\displaystyle \Pi_{\mathbb{R}}(S^{1}_{\sqrt{\frac{1}{6}}}\times S^{5}_{\sqrt{\frac{5}{6}}})$ 
\end{center}
\begin{center} 
$\displaystyle \Pi_{\mathbb{R}}(S^{2}_{\sqrt{\frac{2}{6}}}\times S^{4}_{\sqrt{\frac{4}{6}}})$
\end{center}
\begin{center} 
$\displaystyle \Pi_{\mathbb{R}}(S^{3}_{\sqrt{\frac{3}{6}}}\times S^{3}_{\sqrt{\frac{3}{6}}})$
\end{center}
From equation (\ref{areascliffordreal})
\begin{center}
$\displaystyle \displaystyle |\Pi_{\mathbb{R}}(S^{1}_{\sqrt{\frac{1}{6}}}\times S^{5}_{\sqrt{\frac{5}{6}}})|=\frac{25\sqrt{5}\pi^{4}}{216}$
\end{center}

\begin{center}
$\displaystyle \displaystyle |\Pi_{\mathbb{R}}(S^{2}_{\sqrt{\frac{2}{6}}}\times S^{4}_{\sqrt{\frac{4}{6}}})|=\frac{64\pi^{3}}{81}$
\end{center}

\begin{center}
$\displaystyle \displaystyle |\Pi_{\mathbb{R}}(S^{3}_{\sqrt{\frac{3}{6}}}\times S^{3}_{\sqrt{\frac{3}{6}}})|=\frac{\pi^{4}}{4}$
\end{center}
thus

\begin{center}
 $\displaystyle \displaystyle W(\mathbb{R}P^{7})=\min \{\frac{25\sqrt{5}\pi^{4}}{216}, \frac{64\pi^{3}}{81},\frac{\pi^{4}}{4}\}=\frac{\pi^{4}}{4}$.
\end{center}
\end{itemize}
\begin{remark} Viana also computed the width of the real projective space \cite{celso} and Batista and Lima did it in dimensions less than or equal to $7$ \cite{batista, batista2}. 
\end{remark}

\begin{remark}
We do not apply Theorem \ref{xin} to $\mathbb{H}P^{r-1}$ because the only possible case is $\mathbb{H}P^{1}$ which is isometric to $S^{4}$ and its width is already computed.
\end{remark}

\end{proof}

\bibliographystyle{plain}
\bibliography{mybib}

\end{document}